\DeclareMathOperator\Char{Char}%
\DeclareMathOperator\Gal{Gal}%
\DeclareMathOperator\diag{diag}%
\DeclareMathOperator\GL{GL}%
\newcommand{\li}{\langle} 
\newcommand{\re}{\rangle} 
\newcommand{\lire}{\li\,\cdot\,,\cdot\,\re} 
\newcommand{\quer}{\overline{\phantom{a}}}
\newcommand{\ol}[1]{\overline{#1}}
\newcommand{\x}{\times} 
\newcommand{\rip}{\mathrel{\parallel_{r}}}
\newcommand{\lep}{\mathrel{\parallel_{\ell}}}
\newcommand{\relpi}[1]{\mathrel{\pi_{#1}}}
\newcommand{\lre}{\relpi{\ell r}}
\newcommand{\riS}{\cS_{r}}
\newcommand{\leS}{\cS_{\ell}}
\newcommand{\lrC}{\cC_{\ell r}}
\newcommand{\XX}{\phantom{-}} 
\newcommand{\bPH}{\bP(H)} 
\newcommand{\dsp}{(\bP,{\lep},{\rip})} 
\newcommand{\dspH}{\bigl(\bPH,{\lep},{\rip}\bigr)}
\newcommand{\cLH}{\cL(H)} 
\newcommand{\cAH}{\cA(H)} 
\renewcommand{\phi}{\varphi}
\newcommand{\cA}{{\mathcal A}} 
\newcommand{\cB}{{\mathcal B}}
\newcommand{\cC}{{\mathcal C}}
\newcommand{\cD}{{\mathcal D}}
\newcommand{\cF}{{\mathcal F}}
\newcommand{\cL}{{\mathcal L}}
\newcommand{\cP}{{\mathcal P}}
\newcommand{\cS}{{\mathcal S}}
\newcommand{\cR}{{\mathcal R}}
\newcommand{\bP}{{\mathbb P}}
\newcommand{\bR}{{\mathbb R}}
\newcommand{\bQ}{{\mathbb Q}}
 \newtheorem{thm}{Theorem}[section]
 \newtheorem{cor}[thm]{Corollary}
 \newtheorem{lem}[thm]{Lemma}
 \newtheorem{prop}[thm]{Proposition}
 \theoremstyle{definition}
 \newtheorem{defn}[thm]{Definition}
 \theoremstyle{remark}
 \newtheorem{rem}[thm]{Remark}
 \numberwithin{equation}{section}
 \newtheorem{exa}[thm]{Example}
\begin{document}

\author{Hans Havlicek \and Stefano Pasotti \and Silvia Pianta}
\title{Clifford-like parallelisms}
\date{\textit{Dedicated to Helmut Karzel on the occasion of his 90th birthday}}

\maketitle

\begin{abstract}
Given two parallelisms of a projective space we describe a construction, called
blending, that yields a (possibly new) parallelism of this space. For a
projective double space $\dsp$ over a quaternion skew field we characterise the
``Clifford-like'' parallelisms, \emph{i.e.} the blends of the Clifford
parallelisms $\lep$ and $\rip$, in a geometric and an algebraic way. Finally,
we establish necessary and sufficient conditions for the existence of
Clifford-like parallelisms that are not Clifford.\\
\par~\par\noindent
\textbf{Mathematics Subject Classification (2010):} 51A15, 51J15 \\
\textbf{Key words:} Blend of parallelisms, Clifford parallelism, projective
double space, quaternion skew field
\end{abstract}

\maketitle

\section{Introduction}
The first definition of parallel lines in the real projective $3$-space dates
back to 1873 and was introduced by W.K.~Clifford in the metric framework of
elliptic geometry (see \cite{Cliff-873}): two distinct lines $M$ and $N$ in the
real elliptic $3$-space, are said to be \emph{Clifford parallel}, if the four
lines $M$, $N$, $M^\perp$ and $N^\perp$ are elements of the same regulus. (Here
$M^\perp$ denotes the polar line of $M$ w.r.t. the ``absolute'', \emph{i.e.}
the imaginary quadric that determines the elliptic metric in the real
projective $3$-space).
\par

Some years later, in 1890 F.~Klein revived Clifford's ideas and, using the
complexification of the real projective space, defined two lines to be
\emph{parallel in the sense of Clifford} if they meet the same complex
conjugate pair of generators of the absolute (see \cite{Klein-90}).

\par
Depending on the kind of generators under consideration, one can speak of
\emph{right} parallel, or \emph{left} parallel lines, then each fixed conjugate
pair of generators ``indicates'' a \emph{left} (or \emph{right}) \emph{parallel
class}, which in fact is a regular spread, namely an elliptic linear congruence
of the projective space.

\par
Thus we can say that a \emph{Clifford parallelism} in the real projective
3-space consists of all regular spreads, or elliptic linear congruences, whose
indicator lines are pairs of complex conjugate lines of a regulus contained in
an imaginary quadric. Besides, Clifford parallelisms go in pairs, and also note
that all (real) Clifford parallelisms are projectively equivalent. An
interesting survey on the various definitions of Clifford parallelisms can be
found in \cite{bett+r-12a}.

\par
Generalising this situation, H.~Karzel, H.-J.~Kroll and K.~S\"{o}rensen in 1973
introduced the notion of a \emph{projective double space} $\dsp$, that is a
projective space (of unspecified dimension, over an unspecified field) equipped
with two parallelism relations fulfilling a configurational property which can
be expressed by the axiom (DS) of Section~\ref{se:parallelisms} (see
\cite{kks-73}, \cite{kks-74}). The real projective $3$-space with left and
right Clifford parallelisms is an example and it turns out that the projective
double spaces $\dsp$ with ${\lep}\neq{\rip}$ are necessarily of dimension $3$
and precisely the ones that can be obtained from a quaternion skew field $H$
over a field $F$ as in Section~\ref{se:mixed} (see \cite{kks-73}, \cite{kks-74}, \cite{kk-75}, \cite{kroll-75}).

\par
In this way one obtains what in 2010 A.~Blunck, S.~Pasotti and S.~Pianta called
\emph{generalized Clifford parallelisms} in the note \cite{blunck+p+p-10a}. If
the maximal commutative subfields of $H$ are not mutually $F$-isomorphic, then
new ``non-Clifford'' regular parallelisms can be obtained by ``blending'' in
some suitable way the left and right parallel classes (see
\cite[4.13]{blunck+p+p-10a}). This method has no equivalent in the classical
case, since the maximal commutative subfields of the real quaternions are
mutually $\bR$-isomorphic.

\par
Taking up this idea, we introduce here the definition of \emph{Clifford-like
parallelism} in a projective double space $\dsp$, that is a parallelism
$\parallel$ on $\bP$ such that
\begin{equation*}
\forall\;M,N\in\cL \colon    M\parallel N \Rightarrow (M\lep N\mbox{~~or~~} M\rip N) .
\end{equation*}

\par
In Section \ref{se:equiv_rel} we start from the more general setting of
equivalence relations on a set $\cL$ and we define a \emph{blend} of two
equivalence relations $\relpi1$, $\relpi2$ as an equivalence relation
${\relpi3}$ such that each equivalence class of $\relpi3$ coincides with an
equivalence class of $\relpi1$ or $\relpi2$. In order to obtain a
characterisation of all the blends of $\relpi1$ and $\relpi2$ in
Proposition~\ref{prop:all_blends}, we use the equivalence relation $\relpi{12}$
generated by them, which is the join of $\relpi1$ and $\relpi2$ in the lattice
of equivalence relations on $\cL$.

\par
In Section \ref{se:parallelisms} we study the blends of parallelisms of a
projective space. By Theorem \ref{thm:par}, we can prove that the Clifford-like
parallelisms of a projective double space $(\bP,\lep,\rip)$ are precisely the
``blends'' of $\lep$ and $\rip$. Therefore Clifford-like parallelisms are
regular.

\par
In Section \ref{se:mixed} we connect a projective double space $\dsp$ with
${\lep}\neq{\rip}$ to a quaternion skew field $H$ over a field $F$ and we
describe the equivalence relation $\lre$ generated by $\lep$ and $\rip$ using
the maximal commutative subfields of $H$ (see Theorem \ref{thm:equi}). In
Theorem \ref{thm:Clike} we obtain a characterisation of all Clifford-like
parallelisms of $\dspH$ showing that they are precisely those introduced in
\cite[4.13]{blunck+p+p-10a}. Finally, in Theorems \ref{thm:ordi} and
\ref{thm:really_new} we discuss the existence and some properties of
Clifford-like parallelisms that are not Clifford.
\par
To conclude, we observe that it might be interesting to investigate the blends
of the left and right parallelisms of an arbitrary kinematic space in the same
spirit as in \cite{paso-10a}.

\section{Blends of equivalence relations}\label{se:equiv_rel}

Throughout this section we consider an arbitrary set $\cL$. Let
${\relpi{}}\subseteq\cL\times\cL$ be an equivalence relation on $\cL$. The
partition of $\cL$ associated with $\relpi{}$ is denoted by $\Pi$. The elements
of $\Pi$ are called \emph{$\relpi{}$-classes}. For any $M\in\cL$ we
denote by $\cC_{}(M)$ the $\relpi{}$-class containing $M$. The same kind of
notation will be used for other equivalence relations on $\cL$ by
writing, for example, $\relpi{1}$, $\Pi_1$ and $\cC_{1}(M)$.

\par

The following simple lemma will be used repeatedly.

\begin{lem}\label{lem:part_B}
Let $\relpi{}$ be an equivalence relation on $\cL$ and\/ $\cB\subseteq\cL$.
Then the following are equivalent.
\begin{enumerate}
\item\label{lem:part_B.a} $\cB$ admits a partition by $\relpi{}$-classes.

\item\label{lem:part_B.b} $\{\cC(X)\mid X\in \cB\}$ is the only partition
    of\/ $\cB$ by $\relpi{}$-classes.

\item\label{lem:part_B.c} $\cB=\bigcup_{X\in\cB}\cC_{}(X)$.

\item\label{lem:part_B.d} $\cL\setminus\cB$ admits a partition by
    $\relpi{}$-classes.
\end{enumerate}
\end{lem}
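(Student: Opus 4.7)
The plan is to establish the cycle (a) $\Rightarrow$ (c) $\Rightarrow$ (b) $\Rightarrow$ (a), and then deduce (a) $\Leftrightarrow$ (d) by applying the already-proved equivalence to the complementary set $\cL\setminus\cB$. The underlying observation is that ``admitting a partition by $\relpi{}$-classes'' is simply the property that the set is saturated (i.e.\ closed) under $\relpi{}$, and saturation of $\cB$ is manifestly the same as saturation of its complement.

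First I would prove (a) $\Rightarrow$ (c). If $\cB$ is the disjoint union of a family of $\relpi{}$-classes, then every $X\in\cB$ lies in exactly one member of that family; since $\relpi{}$-classes are pairwise disjoint or equal, this member must be $\cC(X)$. Hence $\cC(X)\subseteq\cB$ for every $X\in\cB$, giving $\bigcup_{X\in\cB}\cC(X)\subseteq\cB$; the reverse inclusion is trivial because $X\in\cC(X)$.

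Next, (c) $\Rightarrow$ (b): under (c), after removing duplicates, the family $\{\cC(X)\mid X\in\cB\}$ is a partition of $\cB$ by $\relpi{}$-classes. Any other such partition $\cF$ must contain, for each $X\in\cB$, the unique $\relpi{}$-class through $X$, namely $\cC(X)$; so $\cF$ coincides with $\{\cC(X)\mid X\in\cB\}$. The implication (b) $\Rightarrow$ (a) is immediate by definition.

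Finally, for (a) $\Leftrightarrow$ (d), I would argue that (c) is symmetric in $\cB$ and $\cL\setminus\cB$: indeed (c) says that whenever $X\in\cB$ and $Y\relpi{}X$ then $Y\in\cB$, equivalently whenever $Y\in\cL\setminus\cB$ and $X\relpi{}Y$ then $X\in\cL\setminus\cB$, which is exactly condition (c) written for $\cL\setminus\cB$ in place of $\cB$. Applying the already-established equivalence (a) $\Leftrightarrow$ (c) to the set $\cL\setminus\cB$ then yields (d). No step here is a genuine obstacle; the only care needed is to keep the quantifier ``for every $X\in\cB$'' distinct from ``the $\relpi{}$-classes appearing are indexed by $X\in\cB$'', which is handled cleanly once one notes that $X\in\cC(X)$ forces the indexing family to exhaust $\cB$.
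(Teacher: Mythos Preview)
Your proof is correct and essentially the same as the paper's. The paper runs a single cycle \eqref{lem:part_B.a} $\Rightarrow$ \eqref{lem:part_B.b} $\Rightarrow$ \eqref{lem:part_B.c} $\Rightarrow$ \eqref{lem:part_B.d} $\Rightarrow$ \eqref{lem:part_B.a}, whereas you close the cycle \eqref{lem:part_B.a} $\Rightarrow$ \eqref{lem:part_B.c} $\Rightarrow$ \eqref{lem:part_B.b} $\Rightarrow$ \eqref{lem:part_B.a} and then get \eqref{lem:part_B.d} by the symmetry of \eqref{lem:part_B.c} under $\cB\leftrightarrow\cL\setminus\cB$; the underlying content---that each condition is equivalent to $\cB$ being $\relpi{}$-saturated---is identical.
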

\begin{proof}
\eqref{lem:part_B.a} $\Rightarrow$ \eqref{lem:part_B.b}. Let $\Sigma $ be a
partition of $\cB$ by $\relpi{}$-classes. Then $\Sigma$ coincides with the
partition given in \eqref{lem:part_B.b}.
\eqref{lem:part_B.b}~$\Rightarrow$~\eqref{lem:part_B.c}. This is obvious.
\eqref{lem:part_B.c}~$\Rightarrow$~\eqref{lem:part_B.d}. It suffices to observe
that $\{\cC(X)\mid X\in \cL\setminus\cB\}$ is a partition of $\cL\setminus\cB$
by $\relpi{}$-classes. \eqref{lem:part_B.d}~$\Rightarrow$~\eqref{lem:part_B.a}.
The existence of a partition, say $\Sigma'$, of $\cL\setminus\cB$ by
$\relpi{}$-classes implies that $\Pi\setminus\Sigma'$ is a partition of $\cB$
by $\relpi{}$-classes.
\end{proof}
\par
We now introduce our basic notion.

\begin{defn}\label{def:blend}
Let $\relpi1$ and $\relpi2$ be (not necessarily distinct) equivalence relations
on $\cL$. An equivalence relation $\relpi3$ on $\cL$ is called a
\emph{blend}\/ of $\relpi1$ and $\relpi2$ if
\begin{equation}\label{eq:blend}
    \Pi_3 \subseteq \Pi_1\cup \Pi_2 .
\end{equation}
\end{defn}
Equivalently, condition \eqref{eq:blend} can be written in the form
\begin{equation}\label{eq:blend1or2}
    \forall\;M\in\cL \colon    \cC_3(M) = \cC_1(M) \mbox{~~or~~}  \cC_3(M) = \cC_2(M).
\end{equation}
The \emph{trivial blends} of $\relpi1$ and $\relpi2$ are the relations
$\relpi1$ and $\relpi2$ themselves.
\par
Our aim is to describe all blends of equivalence relations $\relpi1$ and
$\relpi2$ on $\cL$. We thereby use that all equivalence relations on
$\cL$ constitute a lattice; see, for example, \cite[Ch.~I, \S8, Ex.~9]{birk-79a} or \cite[Sect.~50]{szasz-63a}. In
this lattice, the \emph{meet} of $\relpi1$ and $\relpi2$ equals
${\relpi1}\cap{\relpi2}$ (however, the meet of equivalence relations is
irrelevant for our investigation). The \emph{join} of $\relpi1$ and $\relpi2$
is the intersection of all equivalence relations on $\cL$ that contain
$\relpi1\cup\relpi2$ or, in other words, the equivalence relation
\emph{generated} by $\relpi1$ and $\relpi2$. This join is denoted by
$\relpi{12}$. For all $M,N\in \cL$, we have $M\relpi{12} N$
precisely when there exist an integer $n\geq 1$ and (not necessarily distinct)
elements $N_1,N_2,\ldots,N_{2n+1}\in\cL$ such that
\begin{equation}\label{eq:pi_12}
    M=N_1 \relpi{1} N_{2}\relpi{2} N_3 \relpi{1}\cdots \relpi{2} N_{2n+1}=N.
\end{equation}
Also, we need another elementary lemma.

\begin{lem}\label{lem:part_12}
Let $\relpi{1}$ and $\relpi{2}$ be equivalence relations on $\cL$ and denote by
$\relpi{12}$ the equivalence relation generated by $\relpi{1}$ and $\relpi{2}$.
Furthermore, let $\cB\subseteq\cL$. Then the following statements are
equivalent.
\begin{enumerate}
\item\label{lem:part_12.a} $\cB$ admits a partition by $\pi_{12}$-classes.

\item\label{lem:part_12.b} $\cB$ admits a partition by
    $\relpi{1}$-classes and a partition by $\relpi2$-classes.
\end{enumerate}
\end{lem}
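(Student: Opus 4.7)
The plan is to reformulate both statements by means of Lemma~\ref{lem:part_B}: for each of the relations $\relpi1$, $\relpi2$, $\relpi{12}$, the condition that $\cB$ admits a partition by classes of that relation is, by \eqref{lem:part_B.c}, equivalent to the closure condition $\cB=\bigcup_{X\in\cB}\cC(X)$ for the corresponding equivalence classes. Once rephrased in this way, both implications of the lemma become transparent.

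For \eqref{lem:part_12.a}~$\Rightarrow$~\eqref{lem:part_12.b}, I would use the obvious inclusions $\relpi1\subseteq\relpi{12}$ and $\relpi2\subseteq\relpi{12}$, which force every $\relpi{12}$-class to be a union of $\relpi1$-classes and also a union of $\relpi2$-classes. Hence any set that is a union of $\relpi{12}$-classes is automatically a union of $\relpi1$-classes and a union of $\relpi2$-classes, and Lemma~\ref{lem:part_B} supplies the two partitions required by~(b).

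For the reverse implication, I would fix $X\in\cB$ and an arbitrary $Y\in\cC_{12}(X)$ and prove $Y\in\cB$. By the chain description~\eqref{eq:pi_12} of the join, there exist $N_1,\ldots,N_{2n+1}\in\cL$ with $X=N_1$, $Y=N_{2n+1}$, and the relations $N_{2k-1}\relpi1 N_{2k}$ and $N_{2k}\relpi2 N_{2k+1}$ alternating along the chain. A short induction on $j$ shows $N_j\in\cB$: the step from an odd to an even index uses the closure of $\cB$ under $\relpi1$, the step from an even to an odd index uses its closure under $\relpi2$. Therefore $\cC_{12}(X)\subseteq\cB$ for every $X\in\cB$, and a final appeal to Lemma~\ref{lem:part_B} delivers~(a).

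There is no deep obstacle here; the only point that deserves attention is that the reverse implication relies critically on the explicit chain description of $\relpi{12}$, as this is exactly where the two closure hypotheses on $\cB$ interlock. Everything else is bookkeeping with Lemma~\ref{lem:part_B}.
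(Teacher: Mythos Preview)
Your proposal is correct and follows essentially the same approach as the paper's own proof: both directions are reduced via Lemma~\ref{lem:part_B}\eqref{lem:part_B.c} to closure conditions on $\cB$, with \eqref{lem:part_12.a}~$\Rightarrow$~\eqref{lem:part_12.b} obtained from the inclusions $\cC_i(X)\subseteq\cC_{12}(X)$ and the converse from walking along a chain as in~\eqref{eq:pi_12}. The only cosmetic difference is that the paper leaves the induction along the chain implicit, whereas you spell it out.
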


\begin{proof}
Let \eqref{lem:part_12.a} be satisfied. For each $X\in\cB$, we have
$\cC_{1}(X)\cup\cC_{2}(X)\subseteq\cC_{12}(X)\subseteq\cB$, where the second
inclusion follows by applying Lemma~\ref{lem:part_B} to an existing partition
of $\cB$ by $\pi_{12}$-classes. This forces
$\cB\subseteq\bigcup_{Y\in\cB}\cC_{1}(Y)\subseteq\cB$ and
$\cB\subseteq\bigcup_{Z\in\cB}\cC_{2}(Z)\subseteq \cB$. These two formulas in
combination with Lemma~\ref{lem:part_B} establish \eqref{lem:part_12.b}.
\par
Conversely, let us choose some $M\in\cB$. Then, for all $N\in\cC_{12}(M)$,
there is a finite sequence as in \eqref{eq:pi_12}, whence $N\in\cB$. Thus
$\cC_{12}(M)\subseteq\cB$. This shows
$\cB\subseteq\bigcup_{X\in\cB}\cC_{12}(X)\subseteq\cB$, and
Lemma~\ref{lem:part_B} provides the existence of a partition of $\cB$ by
$\relpi{12}$-classes.
\end{proof}
\begin{prop}\label{prop:all_blends}
Let $\relpi1$ and $\relpi2$ be equivalence relations on $\cL$. Furthermore,
denote by $\relpi{12}$ the equivalence relation generated by $\relpi1$ and
$\relpi2$.
\begin{enumerate}
\item\label{prop:all_blends.a} Upon choosing any subset $\cD$ of $\cL$ we
    let
    \begin{equation}\label{eq:B}
        \cB := \bigcup_{X\in \cD} \cC_{12}(X).
    \end{equation}
    Then the set
    \begin{equation}\label{eq:Pi_B}
        \Pi_\cB:= \bigl\{\cC_{1}(M)\mid M\in \cB\bigr\}
        \cup      \bigl\{\cC_{2}(M)\mid M\in \cL\setminus\cB\bigr\}
    \end{equation}
    is a partition of $\cL$, whose associated equivalence relation
    $\pi_\cB$ is a blend of $\relpi1$ and $\relpi2$.
\item\label{prop:all_blends.b} Conversely, any blend of $\relpi1$ and
    $\relpi2$ arises according to\/ \eqref{prop:all_blends.a} from at least
    one subset of $\cL$.

\item\label{prop:all_blends.c} Let $\cD$ and $\tilde\cD$ be subsets of
    $\cL$. Applying the construction from\/ \eqref{prop:all_blends.a} to
    $\cD$ and $\tilde\cD$ gives $\relpi\cB$ and $\relpi{\tilde\cB}$,
    respectively. Then $\relpi\cB$ coincides with $\relpi{\tilde\cB}$ if,
    and only if,
    \begin{equation}\label{eq:coinc}
        \{\cC_{12}(X)\mid X\in\cD\setminus\cL_{12}\} =
        \{\cC_{12}(\tilde X)\mid \tilde X\in\tilde\cD\setminus\cL_{12}\} ,
    \end{equation}
    where $\cL_{12}:=\{M\in\cL\mid \cC_{1}(M)=\cC_{2}(M)\}$.
\end{enumerate}
\end{prop}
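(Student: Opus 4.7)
The plan is to treat the three items in order, using Lemmas~\ref{lem:part_B} and~\ref{lem:part_12} as the main tools and observing that the $\pi_{12}$-classes are the atoms on which one chooses between $\pi_1$- and $\pi_2$-cells, with $\cL_{12}$ playing the distinguished role of the classes on which that choice is vacuous.

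For~(\ref{prop:all_blends.a}), by construction the set $\cB$ in~\eqref{eq:B} is a union of $\pi_{12}$-classes, so Lemma~\ref{lem:part_B} gives the same for $\cL\setminus\cB$, and Lemma~\ref{lem:part_12} yields both a partition of $\cB$ by $\pi_1$-classes and a partition of $\cL\setminus\cB$ by $\pi_2$-classes. Their disjoint union is exactly $\Pi_\cB$ from~\eqref{eq:Pi_B}, which is therefore a partition of $\cL$; the blend condition~\eqref{eq:blend} is immediate from~\eqref{eq:Pi_B}.

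For~(\ref{prop:all_blends.b}), starting from an arbitrary blend $\pi_3$ I would set
\[
    \cB := \{M\in\cL \mid \cC_3(M) = \cC_1(M)\}
\]
and aim to show that $\cB$ is a union of $\pi_{12}$-classes, so that choosing $\cD:=\cB$ in~\eqref{eq:B} reproduces $\cB$ and the resulting $\Pi_\cB$ coincides with $\Pi_3$. A direct check shows that $\cB$ is a union of $\pi_1$-classes and that, for $M\in\cL\setminus\cB$, the blend condition~\eqref{eq:blend1or2} forces $\cC_3(M)=\cC_2(M)\neq\cC_1(M)$; a short chase then shows $\cC_2(M)\cap\cL_{12}=\emptyset$ (else $M$ would lie in $\cL_{12}$, contradicting $\cC_3(M)\neq\cC_1(M)$), whence every $N\in\cC_2(M)$ satisfies $\cC_3(N)=\cC_2(N)\neq\cC_1(N)$, and so $\cL\setminus\cB$ is a union of $\pi_2$-classes. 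The step I expect to be the main obstacle is promoting these one-sided partitions to a partition of $\cB$ by $\pi_{12}$-classes---precisely what Lemma~\ref{lem:part_B}, applied to the $\pi_2$-partition of $\cL\setminus\cB$, followed by Lemma~\ref{lem:part_12}, delivers. The equality $\Pi_\cB=\Pi_3$ then follows from the very definition of $\cB$.

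For~(\ref{prop:all_blends.c}), I would first verify that $\cL_{12}$ is itself a union of $\pi_{12}$-classes: if $X\in\cL_{12}$ and $X\relpi{i}X'$ for $i\in\{1,2\}$, then $X'\in\cC_1(X)=\cC_2(X)$ yields $\cC_1(X')=\cC_2(X')$, so $X'\in\cL_{12}$, and chaining along~\eqref{eq:pi_12} puts all of $\cC_{12}(X)$ into $\cL_{12}$. Hence every $\pi_{12}$-class is entirely inside or entirely outside $\cL_{12}$: on classes of the first kind $\pi_1$ and $\pi_2$ contribute the same cells to $\Pi_\cB$ regardless of whether the class belongs to $\cB$, while on classes of the second kind the $\pi_1$- and $\pi_2$-partitions are genuinely different. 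Sufficiency in~\eqref{eq:coinc} is then immediate, and for necessity any $\pi_{12}$-class $C$ outside $\cL_{12}$ lying, say, in $\cB$ but not $\tilde\cB$ yields, for any $M\in C$, $\cC_\cB(M)=\cC_1(M)\neq\cC_2(M)=\cC_{\tilde\cB}(M)$, contradicting $\pi_\cB=\pi_{\tilde\cB}$.
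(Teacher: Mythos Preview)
Your argument is correct and follows essentially the same route as the paper: both use Lemma~\ref{lem:part_B} and Lemma~\ref{lem:part_12} to pass back and forth between $\pi_1$-, $\pi_2$-, and $\pi_{12}$-partitions of $\cB$ and its complement. Your set $\cB$ in part~(\ref{prop:all_blends.b}) coincides with the paper's $\cD=\cB'$ (the paper's $\cB'$ is visibly equal to its own $\cD$), and your detour through $\cL_{12}$ to show that $\cL\setminus\cB$ is a union of $\pi_2$-classes is a mild variant of the paper's terse observation that no $\pi_3$-class in $\cL\setminus\cB'$ can lie in $\Pi_1$; for part~(\ref{prop:all_blends.c}) you supply the details (in particular the useful remark that $\cL_{12}$ is $\pi_{12}$-saturated) that the paper suppresses under ``immediate consequence of~\eqref{eq:Pi_B}''.
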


\begin{proof}
\eqref{prop:all_blends.a} We read off from \eqref{eq:B} and
Lemma~\ref{lem:part_B} that $\cB$ admits a partition by $\relpi{12}$-classes.
Now Lemma~\ref{lem:part_12} shows that $\cB$ admits a partition by
$\relpi{1}$-classes, namely $\bigl\{\cC_{1}(M)\mid M\in \cB\bigr\}$, and also a
partition by $\relpi{2}$-classes. Applying Lemma~\ref{lem:part_B} to the
latter, gives the existence of a partition of $\cL\setminus\cB$ by
$\relpi{2}$-classes, namely $\bigl\{\cC_{2}(M)\mid M\in
\cL\setminus\cB\bigr\}$. Therefore, in accordance with \eqref{eq:Pi_B},
$\Pi_\cB\subseteq\Pi_1\cup\Pi_2$ is a partition of $\cL$, and so $\pi_\cB$ is a
blend of $\relpi1$ and $\relpi2$.
\par
\eqref{prop:all_blends.b} Given any blend $\relpi3$ of $\relpi1$ and $\relpi2$
we start by defining
\begin{equation}\label{eq:B'}
    \cD:=\bigl\{X\in\cL\mid \cC_{1}(X)=\cC_{3}(X)\bigr\}
    \mbox{~~and~~}
    \cB':=\bigcup_{X\in\cD} \cC_{1}(X).
\end{equation}
According to its definition, $\cB'$ admits a partition $\Sigma'$ by
$\relpi{1}$-classes. From \eqref{eq:B'}, $\Sigma'$ is a partition of $\cB'$ by
$\relpi3$-classes as well. Now Lemma~\ref{lem:part_B} gives that
$\cL\setminus\cB'$ admits a partition, say $\Sigma''$, by $\relpi3$-classes. No
element of $\Sigma''$ can be in $\Pi_1$. Since $\relpi3$ is a blend of
$\relpi1$ and $\relpi2$, we obtain $\Sigma''\subseteq\Pi_2$. Next, by virtue of
Lemma~\ref{lem:part_B}, $\cB'$ admits also a partition by $\relpi{2}$-classes
and, finally, Lemma~\ref{lem:part_12} provides a partition $\Sigma'''$ of
$\cB'$ by $\relpi{12}$-classes.
\par
We now proceed as in part \eqref{prop:all_blends.a} of the current proof,
commencing with the set $\cD$ given in \eqref{eq:B'}. From
Lemma~\ref{lem:part_B} and due to the existence of the partition $\Sigma'''$ of
$\cB'$, we see that the set $\cB$ from \eqref{eq:B} equals the set $\cB'$
appearing in \eqref{eq:B'}. Under these circumstances we end up with
$\Pi_\cB=\Sigma'\cup\Sigma''=\Pi_{3}$.
\par
\eqref{prop:all_blends.c} This is an immediate consequence of
\eqref{eq:Pi_B}.
\end{proof}

The set $\cD$ from Proposition~\ref{prop:all_blends}~\eqref{prop:all_blends.a}
merely serves the purpose of defining the set $\cB$ in \eqref{eq:B}. Formula
\eqref{eq:coinc} does not impose any restriction on $\cD\cap\cL_{12}$ and
$\tilde\cD\cap\cL_{12}$. Therefore, whenever $\cL_{12}$ is non-empty, there is
a choice of $\cD$ and $\tilde\cD$ such that ${\relpi\cB}={\relpi{\tilde\cB}}$
even though $\cB\neq\tilde\cB$. For example, $\cD:=\emptyset$ and
$\tilde\cD:=\cL_{12}\neq\emptyset$ give rise to
$\emptyset=\cB\neq\tilde\cB$, whereas
${\relpi\cB}={\relpi{\tilde\cB}}={\relpi2}$.

The final result in this section will lead us to a characterisation of blends
of parallelisms in Theorem~\ref{thm:par}. It is motivated by the following
evident observation. Let $\relpi1,\relpi2,\relpi3$ be equivalence
relations on $\cL$. If $\relpi3$ is a blend of $\relpi1$ and $\relpi2$ then,
by \eqref{eq:blend1or2},
\begin{equation}\label{eq:1or2}
    \forall\;M,N\in\cL \colon  M\relpi 3 N \Rightarrow (M\relpi1 N\mbox{~~or~~} M\relpi2 N) .
\end{equation}
In our current setting, \eqref{eq:1or2} is not sufficient for $\relpi3$
to be a blend of $\relpi1$ and $\relpi2$. Take, for example, as $\cL$ any set
with at least two elements, let ${\relpi1}={\relpi2}=\cL\times\cL$, and let
${\relpi3}$ be the equality relation on $\cL$. Then \eqref{eq:1or2} is
trivially true, but $\relpi3$ fails to be a blend of $\relpi1$ and $\relpi2$.

\begin{prop}\label{prop:123}
Let $\relpi1,\relpi2,\relpi3$ be equivalence relations on $\cL$ such that\/
\eqref{eq:1or2} is satisfied. Then
\begin{equation}\label{eq:subset}
    \forall\;M\in\cL \colon    \cC_3(M)\subseteq\cC_1(M) \mbox{~~or~~}  \cC_3(M)\subseteq\cC_2(M) .
\end{equation}
\end{prop}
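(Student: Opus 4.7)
The plan is to argue by contradiction. Fix an arbitrary $M\in\cL$ and assume that neither $\cC_3(M)\subseteq\cC_1(M)$ nor $\cC_3(M)\subseteq\cC_2(M)$. Then I can pick witnesses $N_1,N_2\in\cC_3(M)$ with $N_1\notin\cC_1(M)$ and $N_2\notin\cC_2(M)$.

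Next I exploit hypothesis \eqref{eq:1or2} applied to the pairs $(M,N_1)$ and $(M,N_2)$. Since $M\relpi3 N_1$ but not $M\relpi1 N_1$, the disjunction in \eqref{eq:1or2} forces $M\relpi2 N_1$. Symmetrically, $M\relpi3 N_2$ together with $N_2\notin\cC_2(M)$ forces $M\relpi1 N_2$. So the two witnesses sit in distinct ``sides'': $N_1$ is $\relpi2$-related to $M$ only, and $N_2$ is $\relpi1$-related to $M$ only.

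The key step is now to compare $N_1$ and $N_2$ directly. Both lie in $\cC_3(M)$, so $N_1\relpi3 N_2$, and by \eqref{eq:1or2} again we get $N_1\relpi1 N_2$ or $N_1\relpi2 N_2$. In the first case, transitivity of $\relpi1$ applied to $M\relpi1 N_2\relpi1 N_1$ yields $M\relpi1 N_1$, contradicting the choice of $N_1$. In the second case, transitivity of $\relpi2$ applied to $M\relpi2 N_1\relpi2 N_2$ yields $M\relpi2 N_2$, contradicting the choice of $N_2$. Either way we reach a contradiction, completing the proof.

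I do not expect any real obstacle: the whole argument is an application of the pigeonhole principle combined with transitivity, and the example given just before the proposition (with ${\relpi1}={\relpi2}=\cL\times\cL$ and $\relpi3$ the equality) already illustrates why the weaker hypothesis \eqref{eq:1or2} cannot be upgraded to the stronger conclusion $\cC_3(M)=\cC_1(M)$ or $\cC_3(M)=\cC_2(M)$. The whole point of \eqref{eq:subset} is precisely to capture what \eqref{eq:1or2} \emph{does} give, and the three-element case analysis above is the natural way to extract it.
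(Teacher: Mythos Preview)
Your proof is correct and follows essentially the same argument as the paper's own proof: a contradiction argument picking two witnesses in $\cC_3(M)$ lying outside $\cC_1(M)$ and $\cC_2(M)$ respectively, using \eqref{eq:1or2} to place each on the ``other side'', and then applying \eqref{eq:1or2} once more to the pair of witnesses to derive a contradiction via transitivity. The only difference is notational (the paper writes $M_0,M_1,M_2$ where you write $M,N_2,N_1$).
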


\begin{proof}
Assume, to the contrary, that \eqref{eq:subset} does not hold. So, there is an
$M_0\in\cL$ such that $\cC_3(M_0)\not\subseteq\cC_1(M_0)$ and
$\cC_3(M_0)\not\subseteq\cC_2(M_0)$. Now $\cC_3(M_0)\not\subseteq\cC_1(M_0)$
implies the existence of an $M_2\in\cC_3(M_0)\setminus\cC_1(M_0)$. Applying
\eqref{eq:1or2} to $M_2\relpi3 M_0$ and taking into account that
$M_2\notin\cC_1(M_0)$, we obtain $M_2\in\cC_2(M_0)$. Hence
\begin{equation}\label{eq:M2}
    M_2 \in \bigl(\cC_2(M_0)\cap\cC_3(M_0)\bigr)\setminus \cC_1(M_0).
\end{equation}
Likewise, $\cC_3(M_0)\not\subseteq\cC_2(M_0)$ implies that there exists an
element
\begin{equation}\label{eq:M1}
    M_1 \in \bigl(\cC_1(M_0)\cap\cC_3(M_0)\bigr) \setminus \cC_2(M_0).
\end{equation}
All in all, $M_1\relpi3 M_0 \relpi3 M_2$ yields $M_1\relpi3 M_2$. Thus, by
\eqref{eq:1or2}, at least one of the following is satisfied.
\par
(i) $M_1\relpi1 M_2$. This gives $M_2\in\cC_1(M_1)=\cC_1(M_0)$ and contradicts
\eqref{eq:M2}.
\par
(ii) $M_1\relpi2 M_2$. This gives $M_1\in\cC_2(M_2)=\cC_2(M_0)$ and contradicts
\eqref{eq:M1}.
\end{proof}

\section{Blends of parallelisms}\label{se:parallelisms}

We consider a projective space $\bP$ with point set $\cP$ and line set $\cL$.
An equivalence relation on $\cL$ is called a \emph{parallelism} on $\bP$ if
each point $q\in\cP$ is incident with precisely one line from each equivalence
class; see, for example, \cite{john-03a}, \cite{john-10a}, or
\cite[\S~14]{karz+k-88}. The notation from the previous section will slightly
be altered when dealing with parallelisms by writing $\parallel$ instead of
$\relpi{}$. In addition, if ${\parallel}\subseteq\cL\times\cL$ is a parallelism,
then the equivalence class of a line $M\in\cL$ will be called its
\emph{parallel class}, and it will be denoted by $\cS(M)$ in order to emphasise
the fact that $\cS(M)$ is a \emph{spread} of $\bP$. On the other hand, the
partition of $\cL$ arising from $\parallel$ will be written as $\Pi$ like
before. In the presence of several parallelisms we shall distinguish
between these objects by adding appropriate indices or attributes.

As anticipated, the next theorem provides a characterisation of blends of
parallelisms by virtue of Proposition~\ref{prop:123}.

\begin{thm}\label{thm:par}
Let\/ $\parallel_1$ and\/ $\parallel_2$ be parallelisms on\/ $\bP$. Then the
following hold.
\begin{enumerate}
\item\label{prop:par.a} Any blend of\/ $\parallel_1$ and\/ $\parallel_2$ is
a parallelism on\/ $\bP$.

\item\label{prop:par.b} A parallelism\/ $\parallel_3$ on\/ $\bP$ is a blend
    of\/ $\parallel_1$ and\/ $\parallel_2$ if, and only if,
\begin{equation}\label{eq:par1or2}
\forall\;M,N\in\cL \colon    M\parallel_3 N \Rightarrow (M\parallel_1 N\mbox{~~or~~} M\parallel_2 N) .
\end{equation}
\end{enumerate}
\end{thm}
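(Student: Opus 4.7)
For part~\eqref{prop:par.a}, my approach is essentially unpacking the definitions. If $\parallel_3$ is a blend of $\parallel_1$ and $\parallel_2$, then by Definition~\ref{def:blend} every $\parallel_3$-class lies in $\Pi_1\cup\Pi_2$. Since $\parallel_1$ and $\parallel_2$ are parallelisms, every element of $\Pi_1\cup\Pi_2$ is a spread of $\bP$. Hence every $\parallel_3$-class is a spread, which is exactly the condition for $\parallel_3$ to be a parallelism.

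For the forward direction of part~\eqref{prop:par.b}, I would simply invoke the observation \eqref{eq:1or2} (which was already noted right before Proposition~\ref{prop:123}): if $M\parallel_3 N$ with $\cC_3(M)\in\{\cC_1(M),\cC_2(M)\}$, then $N$ is $\parallel_1$- or $\parallel_2$-related to $M$.

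The substantive step is the converse direction of~\eqref{prop:par.b}. Assume $\parallel_3$ is a parallelism satisfying \eqref{eq:par1or2}. By Proposition~\ref{prop:123}, for each $M\in\cL$ we have $\cC_3(M)\subseteq\cC_1(M)$ or $\cC_3(M)\subseteq\cC_2(M)$. The plan is to upgrade this inclusion to equality, and this is where the parallelism assumption (absent in the counterexample discussed before Proposition~\ref{prop:123}) does the work. Concretely, I would prove the auxiliary statement: if $\cS\subseteq\cS'$ are two spreads of $\bP$, then $\cS=\cS'$. This is a one-line argument: any $L\in\cS'$ carries a point $p$, and the unique line of the spread $\cS$ through $p$ lies in $\cS'$, hence by uniqueness within the spread $\cS'$ must coincide with $L$, giving $L\in\cS$. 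Applying this with $\cS:=\cC_3(M)$ and $\cS'$ the appropriate one of $\cC_1(M),\cC_2(M)$ yields $\cC_3(M)\in\{\cC_1(M),\cC_2(M)\}$ for every $M$, which is condition \eqref{eq:blend1or2}.

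The only potential obstacle is the passage from $\subseteq$ to $=$, but as sketched it reduces to the basic fact that two spreads, each being a partition of $\cP$, cannot be properly nested. No real calculation is needed; the main conceptual point is that Proposition~\ref{prop:123} delivers set-theoretic containment, while the parallelism structure (each class is a partition of the point set) forces the containment to be equality.
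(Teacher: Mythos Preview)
Your proposal is correct and follows essentially the same route as the paper: part~\eqref{prop:par.a} by noting that every blend class is a spread, the forward direction of~\eqref{prop:par.b} via \eqref{eq:1or2}, and the converse via Proposition~\ref{prop:123} combined with the fact that no spread is a proper subset of another spread. The only difference is that you spell out the one-line argument for the latter fact, whereas the paper simply asserts it.
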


\begin{proof}
\eqref{prop:par.a} All parallel classes of the given parallelisms are spreads
of $\bP$. The same applies therefore to all equivalence classes of any blend of
$\parallel_1$ and $\parallel_2$, that is, such a blend is a parallelism on
$\bP$.
\par
\eqref{prop:par.b} If $\parallel_3$ is a blend of $\parallel_1$ and
$\parallel_2$ then \eqref{eq:par1or2} is nothing but a reformulation of
\eqref{eq:1or2}. Conversely, we first make use of Proposition~\ref{prop:123},
which gives \eqref{eq:subset} up to some notational differences. Next, we
notice that no proper subset of a spread of $\bP$ is again a spread of $\bP$.
Since all parallel classes of $\parallel_1$, $\parallel_2$, and $\parallel_3$
are spreads of $\bP$, we are therefore in a position to infer from
\eqref{eq:subset} that, \emph{mutatis mutandis}, \eqref{eq:blend1or2} is
satisfied.
\end{proof}

Suppose that a projective space $\bP$ is endowed with parallelisms $\lep$
and $\rip$ that are called the \emph{left} and \emph{right} parallelism,
respectively. We speak of left (right) parallel lines and left (right) parallel
classes. According to \cite{kks-73}, $\dsp$ constitutes a \emph{projective double
space} if the following axiom is satisfied.

\begin{enumerate}
  \item[(DS)] For all triangles $p_0,p_1,p_2$ in $\bP$ there exists a
      common point of the lines $M_1$ and $M_2$ that are defined as
      follows. $M_1$ is the line through $p_2$ that is left parallel to the
      join of $p_0$ and $p_1$, $M_2$ is the line through $p_1$ that is
      right parallel to the join of $p_0$ and $p_2$.
\end{enumerate}
In case of a projective double space $\dsp$, each of $\lep$ and $\rip$ is
referred to as a \emph{Clifford parallelism} of $\dsp$. We now generalise this
notion.

\begin{defn}\label{def:clifflike}
Let $\dsp$ be a projective double space. A \emph{Clifford-like} parallelism
$\parallel$ of $\dsp$ is a parallelism on $\bP$ such that
\begin{equation*}
\forall\;M,N\in\cL \colon    M\parallel N \Rightarrow (M\lep N\mbox{~~or~~} M\rip N) .
\end{equation*}
\end{defn}
By Theorem~\ref{thm:par}, the Clifford-like parallelisms of $\dsp$ are
precisely the blends of $\lep$ and $\rip$. In particular, $\lep$ and $\rip$
themselves are the trivial examples of Clifford-like parallelisms of $\dsp$.
\par
Next, we recall that there exist projective double spaces $\dsp$ such that
$\lep$ coincides with $\rip$. See \cite{havl-15}, \cite{karzel-65} and \cite{kroll-75} for further
details, an algebraic characterisation, and geometric properties. Such a double
space has only one Clifford-like parallelism, namely ${\lep}={\rip}$. We
therefore exclude this kind of double space from our further discussion.

The projective double spaces $\dsp$ with ${\lep}\neq{\rip}$
are precisely the ones that can be obtained from quaternion skew fields (see
\cite{kks-73}, \cite{kks-74}, \cite{kk-75}, \cite{kroll-75}). A detailed account is the topic of the next
section.

Finally, we observe that the ``left and right Clifford parallelisms'' introduced in \cite{blunck+k+s+s-17z} and defined by an \emph{octonion division algebra} do not give rise to a projective double
space. For further details, see \cite{blunck+k+s+s-17z} and the references therein.

\begin{rem}
In \cite[Rem.~3.7 and Thm.~3.8]{havl+r-17a} the authors gave examples of
\emph{piecewise Clifford parallelisms} with two pieces. Without going into
details, let us point out that (in our terminology) these parallelisms arise
from a three-dimensional Pappian projective space $\bP$ that is made into a
projective double space in two different ways, say
$(\bP,\parallel_{\ell,1},\parallel_{r,1})$ and
$(\bP,\parallel_{\ell,2},\parallel_{r,2})$. Thereby, it has to be assumed that
$\parallel_{\ell,1}$ and $\parallel_{\ell,2}$ share a \emph{single} parallel
class. The piecewise Clifford parallelisms with two pieces are blends of
$\parallel_{\ell,1}$ and $\parallel_{\ell,2}$, but none of these is
Clifford-like with respect to any double space structure on $\bP$. The proof of
the last statement is beyond the scope of this article, since the methods
utilised in \cite{havl+r-17a} are totally different from ours.
\end{rem}

\section{Clifford-like parallelisms from quaternion skew fields}\label{se:mixed}

In this section we deal with a quaternion skew field $H$ with centre $F$. We
thereby stick to the terminology and notation from \cite{blunck+p+p-10a} and
\cite{havl-16a}. Also, we use the abbreviations $H^*:=H\setminus\{0\}$ and
$F^*:=F\setminus\{0\}$. For a
detailed account on quaternions we refer, among others, to
\cite[pp.~46--48]{tits+w-02a} and \cite[Ch.~I]{vigneras}.
\par
The $F$-vector space $H$ is equipped with a quadratic form $H\to F$, called the
\emph{norm form}, sending $q\mapsto q\ol{q}=\ol{q}q$. Here $\quer$ denotes the
\emph{conjugation}, which is an antiautomorphism of the skew field $H$. The
conjugation is of order two and fixes $F$ elementwise. Polarisation of the norm
form yields the symmetric bilinear form
\begin{equation}\label{eq:polar}
    \lire \colon H\x H \to F \colon (p,q)\mapsto
    \li p,q\re := p\ol q + q\ol p = \ol{p}q + \ol{q}p .
\end{equation}
For any subset $X\subseteq H$ we denote by $X^\perp$ the set of those
quaternions that are orthogonal to all elements of $X$ with respect to $\lire$.
\par
The \emph{projective space} $\bP(H)$ is understood to be the set of all
subspaces of the $F$-vector space $H$ and \emph{incidence} is symmetrised
inclusion. We adopt the usual geometric language: \emph{points}, \emph{lines}
and \emph{planes} are the subspaces of vector dimension one, two, and three,
respectively. The set of lines of $\bPH$ will be
written as $\cLH$. Furthermore, we shall regard
$\perp$ as a \emph{polarity} of $\bP(H)$ sending, for example, any line $M$ to
its polar line $M^\perp$. For one kind of line this will now be made more
explicit.

\begin{lem}
For any line $L=F1\oplus Fg$, where $1\in H$ and $g\in H\setminus F$,
the line $L^\perp$ is the set of all $u\in H$ subject to the condition
\begin{equation}\label{eq:L_1-perp}
   u\ol{g} = gu .
\end{equation}
\end{lem}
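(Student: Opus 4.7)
The plan is to unwind the definition of $L^\perp$: a quaternion $u$ lies in $L^\perp$ if, and only if, it is $\lire$-orthogonal to both spanning vectors $1$ and $g$ of $L$. So first I would compute the two scalar conditions explicitly from \eqref{eq:polar}, obtaining
\begin{equation*}
    \li u,1\re = u + \ol u
    \quad\text{and}\quad
    \li u,g\re = u\ol g + g\ol u.
\end{equation*}
The forward implication is then immediate: if $u\in L^\perp$, the first identity forces $\ol u = -u$ (so $u$ is ``pure''), and substituting into the second yields $u\ol g - gu = 0$, i.e.\ \eqref{eq:L_1-perp}.

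For the converse the plan is to show that the single relation $u\ol g = gu$ already implies $\ol u = -u$ by itself, after which $\li u,g\re = 0$ follows for free. The trick I would use is to substitute $u\ol g = gu$ into the expression for $\li u,g\re$ to get
\begin{equation*}
    \li u,g\re \;=\; gu + g\ol u \;=\; g(u+\ol u).
\end{equation*}
Since the left-hand side is in $F$ (the form $\lire$ is $F$-valued) and the scalar $u+\ol u$ also lies in $F$, the product $g(u+\ol u)$ is a scalar multiple of $g$ that lies in $F$. Because $g\notin F$ by hypothesis, this forces $u+\ol u = 0$, i.e.\ $\li u,1\re = 0$; and then $\li u,g\re = u\ol g - gu = 0$ follows from the assumption, so $u\in L^\perp$.

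The main (and essentially only) obstacle is this converse: the condition \eqref{eq:L_1-perp} looks weaker than the two orthogonality equations, so one must genuinely use that $g\notin F$ to recover the ``purity'' $\ol u = -u$. Once the identity $\li u,g\re = g(u+\ol u)$ is written down, the argument is one line, but spotting that rewrite is the only non-routine step. The rest of the proof is pure bookkeeping with \eqref{eq:polar} and the fact that $\lire$ is $F$-bilinear, so $L^\perp = \{1,g\}^\perp$.
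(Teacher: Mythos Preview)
Your proof is correct and essentially identical to the paper's: both reduce $L^\perp$ to the two conditions $u+\ol u=0$ and $u\ol g+g\ol u=0$, handle the forward implication by substitution, and for the converse use the key identity $g(u+\ol u)=g\ol u+u\ol g\in F$ together with $g\notin F$ to force $u+\ol u=0$. The only difference is cosmetic: you phrase things via $\li u,g\re$, the paper via the explicit expression $g\ol u+u\ol g$.
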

\begin{proof}
From $L^\perp = {1^\perp}\cap {g^\perp}$ and \eqref{eq:polar}, a
quaternion $u\in H$ belongs to $L^\perp$ precisely when the following
{system of equations} is satisfied:
\begin{equation}\label{eq:1-g-perp}
            u+\ol{u} = 0, \quad  g\ol{u}+u\ol{g} = 0 .
\end{equation}
It is immediate from \eqref{eq:1-g-perp} that any $u\in L^\perp$ satisfies
\eqref{eq:L_1-perp}. Conversely, if \eqref{eq:L_1-perp} holds for some $u\in H$
then $g(\ol{u}+u) = g\ol{u}+gu = g\ol{u}+u\ol{g}\in F$. Together with $g\notin
F$ and $\ol{u}+u\in F$ this forces $\ol{u}+u=0$, whence the system
\eqref{eq:1-g-perp} is satisfied.
\end{proof}

Let $M,N\in\cLH$. Then the line $M$ is
\emph{left parallel} to the line $N$, in symbols $M\lep N$, if there is a $c\in
H^*$ with $cM=N$. Similarly, $M$ is \emph{right parallel} to $N$, in symbols
$M\rip N$, if there is a $d\in H^*$ with $Md=N$. The relations $\lep$ and
$\rip$ make $\bPH$ into a projective double space $\dspH$, that is, $\lep$ and
$\rip$ are its \emph{Clifford parallelisms} (see \cite{kk-75}). In accordance
with the terminology and notation from Section~\ref{se:parallelisms}, each line
$M\in\cLH$ determines its \emph{left parallel class} $\leS(M)$ and its
\emph{right parallel class} $\riS(M)$. All left (right) parallel classes are
regular spreads of $\bP$ (see \cite[4.8~Cor.]{blunck+p+p-10a} or
\cite[Prop.~4.3]{havl-16a}), that is, $\lep$ and $\rip$ are \emph{regular}
parallelisms \cite[Ch.~26]{john-10a}.


For any choice of $c,d\in H^*$ we can define the $F$-linear bijection
$\mu_{c,d}\colon H\to H\colon p\mapsto cpd$, which acts as a projective
collineation on $\bPH$ preserving both the left and the right Clifford
parallelism as a straightforward computation shows. Also, $\mu_{c,d}$ preserves
the norm form of $H$ up to the factor $c\ol{c}d\ol{d}\in F^*$ so that
orthogonality of subspaces of $H$ is preserved too. Two particular cases
deserve special mention. For $d\in F^*$, in particular for $d=1$, the mapping
$\mu_{c,d}$ is a \emph{left translation}. A \emph{right translation} arises in
a similar way for $c\in F^*$.

Let $\cAH$ be the \emph{star of lines} with
centre $F1$ (with $1\in H$), that is, the set of all lines of $\cLH$ passing
through the point $F1$. From an algebraic point of view, each left (right)
parallel class has a distinguished representative, namely its only line
belonging to $\cAH$. The star $\cAH$ is precisely the set of all
two-dimensional $F$-subalgebras of $H$ or, in other words, the set of all
maximal subfields of $H$. Given $L_1,L_2\in\cAH$
we remind that an \emph{$F$-isomorphism} of $L_1$ onto $L_2$ is a ring
isomorphism $L_1\to L_2$ fixing $F$ elementwise. If such an isomorphism exists then $L_1$ and $L_2$ are called
\emph{$F$-isomorphic}, in symbols $L_1\cong L_2$.

Let $\lre$ denote the equivalence relation on $\cLH$ that is generated by
the left and right Clifford parallelism on $\bPH$. If $M\lre N$ applies, then we
say that $M$ is \emph{left-right equivalent} to $N$.

We now present several characterisations of left-right equivalent lines.

\begin{thm}\label{thm:equi}
Let $M_1,M_2\in\cLH$ and let $L_1$ and $L_2$ be the uniquely determined lines
through the point $F1$ such that $L_1\lep M_1$ and $L_2\rip M_2$. Then the
following are equivalent.
\begin{enumerate}

\item\label{thm:equi.a} $M_1\lre M_2$.

\item\label{thm:equi.e} There exist $e_1,e_2\in H^*$ with $e_1M_1=
    M_2e_2$.

\item\label{thm:equi.b} $\leS(M_1)\cap\riS(M_2)\neq \emptyset$.

\item\label{thm:equi.d} There exists a line $M\in\cLH$ such that
    $\leS(M_1)\cap\riS(M_2) = \{M,M^\perp\}$.

\item\label{thm:equi.c} $L_1\cong L_2$.

\item\label{thm:equi.f} There exists an $e\in H^*$ with $ e^{-1} L_1
    e = L_2$.
\end{enumerate}
\end{thm}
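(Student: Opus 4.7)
The plan is to prove the six equivalences in three blocks: (a)$\Leftrightarrow$(e)$\Leftrightarrow$(b) from the chain structure defining $\lre$, (c)$\Leftrightarrow$(f) via Skolem--Noether, and finally (b)$\Leftrightarrow$(f) together with the upgrade (b)$\Rightarrow$(d). A uniform simplification is to reduce to the case $M_1=L_1$, $M_2=L_2\in\cAH$: writing $M_1=aL_1$, $M_2=L_2b$ with $a,b\in H^*$, the collineations $\mu_{a^{-1},1}$ and $\mu_{1,b^{-1}}$ preserve both Clifford parallelisms and, being norm-similar, the polarity $\perp$; hence each of (a), (b), (d), (e) is invariant under them, while (c) and (f) already involve only $L_1$ and $L_2$.

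For the first block, any chain as in \eqref{eq:pi_12} between $M_1$ and $M_2$ telescopes: gathering all left-multiplications on the left of $M_1$ and all right-multiplications on its right yields $M_2=cM_1d$ for some $c,d\in H^*$, so $cM_1=M_2d^{-1}$ is simultaneously an instance of (e) (with $e_1=c$, $e_2=d^{-1}$) and a common line of $\leS(M_1)$ and $\riS(M_2)$, which is (b). Conversely, any $N\in\leS(M_1)\cap\riS(M_2)$ produces the length-two chain $M_1\lep N\rip M_2$, and any equality $e_1M_1=M_2e_2$ in (e) is already such a chain. For (c)$\Leftrightarrow$(f), Skolem--Noether applies at once: every $F$-isomorphism between the maximal subfields $L_1,L_2$ of the central simple $F$-algebra $H$ extends to an inner automorphism of $H$; conversely, conjugation is always an $F$-isomorphism.

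For (b)$\Rightarrow$(f), after the reduction a line $cL_1=L_2d$ in the intersection contains $c\cdot 1=c$, so $c=\ell d$ for some $\ell\in L_2^*$; substituting and using $\ell L_2=L_2$ gives $dL_1d^{-1}=L_2$, which is (f). The main obstacle is (f)$\Rightarrow$(d), which demands a description of the entire intersection and its identification with $\perp$. Given $d_0$ with $d_0L_1d_0^{-1}=L_2$, the set of all such conjugators equals $d_0$ times the normaliser of $L_1^*$ in $H^*$; since $L_1$ is its own centraliser in $H$ (by maximality) and Skolem--Noether applied to the non-trivial Galois automorphism $\sigma$ of $L_1/F$ furnishes $v_1\in H^*$ with $v_1\ell v_1^{-1}=\sigma(\ell)$ for all $\ell\in L_1$, this normaliser is $L_1^*\cup L_1^*v_1$. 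The corresponding lines $L_2d$ therefore collapse to exactly two, $d_0L_1$ and $d_0L_1v_1$. Writing $L_1=F1\oplus Fg$ with $\bar g=-g$, the identity $\sigma(g)=-g$ translates into $v_1g+gv_1=0$, so the lemma preceding the theorem gives $v_1,gv_1\in L_1^\perp$; dimensionality then forces $L_1v_1=L_1^\perp$, and since $\mu_{d_0,1}$ preserves $\perp$, we have $d_0L_1v_1=(d_0L_1)^\perp$. With $M=d_0L_1$ we obtain $\leS(M_1)\cap\riS(M_2)=\{M,M^\perp\}$, which is (d).
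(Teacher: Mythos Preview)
Your overall architecture is sound and, apart from organisation, close to the paper's: the telescoping for (a)$\Leftrightarrow$(e)$\Leftrightarrow$(b), Skolem--Noether for (c)$\Leftrightarrow$(f), and the reduction to $M_1=L_1$, $M_2=L_2$ are all correct. The paper runs a single cycle (a)$\Rightarrow$(e)$\Rightarrow$(b)$\Rightarrow$(d)$\Rightarrow$(c)$\Rightarrow$(f)$\Rightarrow$(a) and analyses the intersection by looking at the inner automorphism $\mu_{u^{-1},u}$ on $L_1$ for each $u$ in a candidate line; your route via the normaliser of $L_1^*$ is essentially the same computation viewed group-theoretically.

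There is, however, a genuine gap in your (f)$\Rightarrow$(d) when $\Char F=2$. You simultaneously assume that $L_1=F1\oplus Fg$ with $\bar g=-g$ and that $L_1/F$ has a non-trivial Galois automorphism $\sigma$. In characteristic~$2$ these are incompatible: $\bar g=-g$ means $\bar g=g$, i.e.\ $g$ has reduced trace $0$, and for a \emph{separable} $L_1/F$ every $g\in L_1\setminus F$ has trace $1$, so no such $g$ exists; whereas for an \emph{inseparable} $L_1/F$ such $g$ do exist but then $\Gal(L_1/F)$ is trivial and your $\sigma$ (hence $v_1$) is unavailable. The paper handles this by not insisting on $\bar g=-g$: taking any $g\in L_1\setminus F$, one argues that if $\mu_{u^{-1},u}$ is non-trivial on $L_1$ then it must coincide with the restriction of conjugation (this needs a short separate argument in characteristic~$2$ to ensure $g\neq\bar g$), whence $u^{-1}gu=\bar g$, i.e.\ $u\bar g=gu$, and the lemma gives $u\in L_1^\perp$ directly. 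Your argument is easily repaired along the same lines: drop the condition $\bar g=-g$, use that (when it exists) $\sigma$ equals conjugation restricted to $L_1$, so $v_1\bar g=gv_1$ and the lemma yields $v_1\in L_1^\perp$; and treat the inseparable case separately, where the normaliser is just $L_1^*$, the intersection is the single line $d_0L_1$, and $L_1=L_1^\perp$ (all elements have trace $0$), so $\{M,M^\perp\}$ collapses to one line as required.
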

\begin{proof}
\eqref{thm:equi.a}~$\Rightarrow$~\eqref{thm:equi.e}. By the definition of
$\lep$ and $\rip$ and by virtue of \eqref{eq:pi_12}, we obtain that $M_1\lre
M_2$ implies the existence of an integer $n\geq 1$ and elements
$g_1,g_2,\ldots, g_{2n}$ such that
    \begin{equation*}
    M_1 \lep g_1 M_1 \rip g_1 M_1g_2\lep\cdots
    \rip g_{2n-1}g_{2n-3}\cdots  g_1 M_1 g_2g_4\cdots g_{2n} = M_2
\end{equation*}
With $e_1:=g_{2n-1}g_{2n-3}\cdots g_1$ and $e_2:=(g_2g_4\cdots g_{2n})^{-1}$
the assertion follows.

\eqref{thm:equi.e}~$\Rightarrow$~\eqref{thm:equi.b}. Clearly, $e_1M_1=M_2e_2\in
\leS(M_1)\cap\riS(M_2)$.

\par

\eqref{thm:equi.b}~$\Rightarrow$~\eqref{thm:equi.d}. By our assumption, there
exists a line $M$, say, belonging to $\leS(M_1)\cap\riS(M_2)$. Also, there is a
left translation $\mu_{c_1,1}$ taking $M$ to $L_1$, \emph{i.e.}, $c_1M=L_1$.
Since $\mu_{c_1,1}$ preserves not only the left and right Clifford parallelism
but also the orthogonality of lines in both directions, it suffices to verify
that
\begin{equation*}
    \leS(L_1)\cap\riS(L_1)=\{L_1,L_1^\perp\} .
\end{equation*}
To this end we pick a quaternion $g\in L_1\setminus F$, which is maintained
throughout this part of the proof.
\par
First, we take any line $N\in \leS(L_1)\cap\riS(L_1)$. For all $u\in N^*$ we
obtain from $1\in L_1$ that $N=uL_1=L_1u$. Thus the inner automorphism
$\mu_{u^{-1},u}$ of $H$ restricts to an automorphism of $L_1$. There are two
possibilities.

\emph{Case}~(i). $\mu_{u^{-1},u}$ fixes $L_1$ elementwise. Consequently, $u$
commutes with all elements of $L_1$ or, equivalently, $u\in L_1$. Therefore $N=L_1$.

\emph{Case}~(ii). $\mu_{u^{-1},u}$ fixes $F$ elementwise, but not $L_1$. Due to
$[L_1:F]=2$, the identity on $L_1$ and the restriction of $\mu_{u^{-1},u}$ to
$L_1$ are all the elements of the Galois group $\Gal(L_1/F)$. The restriction
of the conjugation $\quer$ to $L_1$ belongs also to $\Gal(L_1/F)$. We proceed
by showing that $g\neq\ol{g}$. If $\Char F\neq 2$ then this immediate from
$g\in L\setminus F$. If $\Char F=2$ then $g\in L\setminus F$ implies $g\neq
u^{-1}gu$. Since $g$ and $u^{-1}gu$ are distinct zeros in $L_1$ of the minimal
polynomial of $g$ over $F$, which reads $X^2 + (g+\ol{g})X+g\ol{g}$, the
coefficient $g+\ol{g}$ in this polynomial does not vanish. This implies
$g=-g\neq \ol{g}$. Irrespective of $\Char F$ we therefore have that
$\mu_{u^{-1},u}$ and $\quer$ restrict to the same automorphism of $L_1$. In
particular, $u^{-1}gu=\ol{g}$, that is, $u\in L_1^\perp$ by
\eqref{eq:L_1-perp}. Therefore $N=L_1^\perp$.
\par
Finally, it remains to establish that $L_1^\perp \in \leS(L_1)\cap\riS(L_1)$.
There exists a non-zero $h\in L_1^\perp$, whence $h\ol{g}=gh$ holds according
to \eqref{eq:L_1-perp}. Due to $g\in L_1$ this yields, for all $v\in L_1$, on
the one hand $(hv)\ol{g}=g(hv)$ and, on the other hand, $(vh)\ol{g}=g(vh)$. As
$L_1^\perp$ is characterised by \eqref{eq:L_1-perp}, we obtain $hL_1\subseteq
L_1^\perp$ and $L_1 h\subseteq L_1^\perp$. Thus $L_1^\perp = hL_1 = L_1h$, as
required. %

\par
\eqref{thm:equi.d}~$\Rightarrow$~\eqref{thm:equi.c}. From $ M=c_1^{-1}L_1
\in\leS(M_1)\cap\riS(M_2)=\leS(L_1)\cap\riS(L_2)$ there is a $c_2\in H^*$ such
that $M= L_2 c_2^{-1}$. Therefore $L_1=c_1 L_2 c_2^{-1}$, and $1\in L_2$ gives
$c_1c_2^{-1}\in L_1^*$. We read off from $1\in L_1$ that $L_1$ is a subalgebra
of $H$, and so $L_1=L_1c_1c_2^{-1}$. Summing up, we have
\begin{equation*}
    L_2= c_1^{-1} L_1 c_2 = c_1^{-1} (L_1c_1c_2^{-1}) c_2=c_1^{-1}L_1 c_1 .
\end{equation*}
This allows us to define a mapping $L_1\to L_2\colon x\mapsto c_1^{-1}xc_1$,
which is an $F$-isomorphism.

\eqref{thm:equi.c}~$\Rightarrow$~\eqref{thm:equi.f}. By the Skolem-Noether
theorem (see \cite[Thm.~4.9]{jac-89}), any $F$-isomorphism $L_1\to L_2$ can be
extended to an inner automorphism of $H$. So there is an $e\in H^*$ with
$L_2=e^{-1}L_1 e$.
\par
\eqref{thm:equi.f}~$\Rightarrow$~\eqref{thm:equi.a}. By our assumptions, there
exist $d_1,d_2,e\in H^*$ with $d_1M_1=L_1$, $M_2d_2=L_2$, and $e^{-1}L_1e =
L_2$. This implies $e^{-1} d_1 M_1 e d_2^{-1} = M_2$. Thus
\begin{equation*}
    M_1 \lep e^{-1}d_1M_1 \rip e^{-1}d_1M_1ed_2^{-1} = M_2,
\end{equation*}
and \eqref{eq:pi_12} gives $M_1\lre M_2$.
\end{proof}

\begin{cor}\label{cor:le-neq-ri}
For all lines $M_1,M_2\in\cLH$ the left parallel class $\leS(M_1)$ is different
from the right parallel class $\riS(M_2)$.
\end{cor}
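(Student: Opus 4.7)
The plan is to argue by contradiction: assuming $\leS(M_1)=\riS(M_2)$ for some lines $M_1,M_2\in\cLH$, I will derive that this common set contains at most two lines, which is impossible for a spread of $\bPH$.

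First I observe that $M_1\in\leS(M_1)=\riS(M_2)$ makes the intersection $\leS(M_1)\cap\riS(M_2)$ non-empty. Hence Theorem~\ref{thm:equi}, via \eqref{thm:equi.b}$\Rightarrow$\eqref{thm:equi.d}, delivers a line $M\in\cLH$ with $\leS(M_1)\cap\riS(M_2)=\{M,M^\perp\}$. Under the standing contradictory assumption this intersection coincides with $\leS(M_1)$, so the left parallel class $\leS(M_1)$ itself consists of at most two lines.

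The main remaining task is to rule out a spread of $\bPH$ of cardinality at most two, and I expect this cardinality check to be the only step requiring explicit reasoning. Here I will use that $H$ has $F$-dimension four while any line of $\bPH$ is a two-dimensional $F$-subspace. If $M\cap M^\perp=\{0\}$, then the sum of a non-zero vector of $M$ and a non-zero vector of $M^\perp$ lies in neither subspace; otherwise $M+M^\perp$ is a proper subspace of $H$, and any vector of $H\setminus(M+M^\perp)$ lies in neither $M$ nor $M^\perp$. Either way, some point of $\bPH$ is not covered by $M\cup M^\perp$, contradicting the spread property of $\leS(M_1)$. All other ingredients are a direct reading of Theorem~\ref{thm:equi}, and no additional structural input on $H$ is needed.
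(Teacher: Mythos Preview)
Your proof is correct and follows essentially the same route as the paper: both invoke Theorem~\ref{thm:equi} to bound $\leS(M_1)\cap\riS(M_2)$ by at most two lines and then observe that a spread of $\bPH$ must contain more than two lines. The only cosmetic difference is that the paper dispatches the cardinality step in one line by noting that $H$ (being a skew field) is infinite, hence every parallel class is infinite, whereas you carry out an explicit dimension argument to exhibit a point missed by $M\cup M^\perp$.
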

\begin{proof}
As $H$ is infinite, so are $\leS(M_1)$ and $\riS(M_2)$. By
Theorem~\ref{thm:equi}, $\leS(M_1)$ and $\riS(M_2)$ have at most two lines in
common, whence they cannot coincide.
\end{proof}

\begin{cor}\label{cor:le-ri-meet} Let $N\in\cLH$. Then
$\leS(N)\cap\riS(N)=\{N,N^\perp\}$.
\end{cor}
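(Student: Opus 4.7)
The plan is to obtain the corollary as an almost immediate consequence of Theorem~\ref{thm:equi}, specifically the implication \eqref{thm:equi.b}~$\Rightarrow$~\eqref{thm:equi.d}. First, I would observe that $N$ is trivially left parallel and right parallel to itself, hence $N \in \leS(N) \cap \riS(N)$. In particular, this intersection is non-empty, so condition \eqref{thm:equi.b} of Theorem~\ref{thm:equi} holds for $M_1 = M_2 = N$.

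Next, I would invoke \eqref{thm:equi.b}~$\Rightarrow$~\eqref{thm:equi.d} to obtain a line $M \in \cLH$ with $\leS(N) \cap \riS(N) = \{M, M^\perp\}$. Since $N$ lies in this intersection, either $N = M$ or $N = M^\perp$. In the first case the conclusion $\leS(N) \cap \riS(N) = \{N, N^\perp\}$ is immediate. In the second case, using that the polarity $\perp$ is an involution on the line set of $\bPH$, we have $M = (M^\perp)^\perp = N^\perp$, so $\{M, M^\perp\} = \{N^\perp, N\} = \{N, N^\perp\}$. Either way the desired equality holds.

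There is essentially no obstacle here: the entire content has already been worked out inside the proof of Theorem~\ref{thm:equi}, where the special intersection $\leS(L_1) \cap \riS(L_1) = \{L_1, L_1^\perp\}$ was established directly; the corollary merely transports this fact from the distinguished line $L_1$ through the point $F1$ to an arbitrary line $N$ by symmetry inherent in the theorem. The only minor point worth writing explicitly is the involutivity of $\perp$ used to collapse the two cases into the single set $\{N, N^\perp\}$.
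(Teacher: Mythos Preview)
Your proposal is correct and follows essentially the same route as the paper's proof: apply Theorem~\ref{thm:equi} with $M_1=M_2=N$, note that $N\in\leS(N)\cap\riS(N)$ verifies condition~\eqref{thm:equi.b}, and read off the conclusion from~\eqref{thm:equi.d}. The paper compresses the last step into the single phrase ``the assertion follows from~\eqref{thm:equi.d}'', whereas you spell out the small case distinction $N=M$ versus $N=M^\perp$ together with the involutivity of $\perp$; this added detail is harmless and indeed makes the deduction fully explicit.
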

\begin{proof}
We consider Theorem~\ref{thm:equi} for $M_1=M_2=N$. Then \eqref{thm:equi.b} holds
and, by $N\in \leS(N)\cap\riS(N)$, the assertion follows from \eqref{thm:equi.d}.
\end{proof}

Note that the result from the
previous corollary is established also in \cite[(2.6)]{kk-75} but using methods
different from ours.

\begin{cor}\label{cor:galois}
Let $L$ be a maximal subfield of $H$, that is, $L$ is a line through the point
$F1$. The field extension $L/F$
is separable if, and only if, the parallel classes $\leS(L)$ and $\riS(L)$ have
two distinct lines in common.
\end{cor}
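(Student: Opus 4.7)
The plan is to apply Corollary~\ref{cor:le-ri-meet} with $N = L$, which gives $\leS(L) \cap \riS(L) = \{L, L^\perp\}$. Consequently, this intersection contains two distinct lines precisely when $L \neq L^\perp$, reducing the assertion to proving that $L \neq L^\perp$ if and only if $L/F$ is separable.

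Pick $g \in L \setminus F$, so that $L = F1 \oplus Fg$ and, by the lemma preceding Theorem~\ref{thm:equi}, $L^\perp = \{u \in H : u\ol{g} = gu\}$. The minimal polynomial of $g$ over $F$ is $X^2 - (g+\ol{g})X + g\ol{g}$, and inspecting its formal derivative shows that $L/F$ is inseparable exactly when $\Char F = 2$ and $g + \ol{g} = 0$. Since $g \notin F$ already forces $\ol{g} \neq g$ when $\Char F \neq 2$, this condition is equivalent to $\ol{g} = g$.

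In the separable case ($\ol{g} \neq g$), the choice $u = 1$ violates $u\ol{g} = gu$, so $1 \notin L^\perp$ and thus $L \neq L^\perp$. In the inseparable case ($\Char F = 2$ and $\ol{g} = g$), the defining relation reduces to $ug = gu$, so $L^\perp$ coincides with the centraliser $C_H(g)$. To finish, I would check that $C_H(g) = L$: any $u \in C_H(g)$ generates, together with $g$, a commutative finite-dimensional $F$-subalgebra of the skew field $H$, which must be a subfield; this subfield contains the maximal subfield $L = F[g]$ and is therefore equal to it. Hence $u \in L$, and $L^\perp = L$.

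The only non-routine step is the identification $L^\perp = C_H(g) = L$ in the inseparable case; everything else is immediate from the defining equation of $L^\perp$ together with the minimal-polynomial criterion for separability.
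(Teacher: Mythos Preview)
Your proof is correct and follows essentially the same route as the paper: both reduce via Corollary~\ref{cor:le-ri-meet} to the equivalence $L\neq L^\perp \Leftrightarrow g\neq\ol{g}\Leftrightarrow L/F$ separable, using the description $L^\perp=\{u:u\ol{g}=gu\}$ and the minimal polynomial of $g$. The paper simply points back to the case analysis inside the proof of Theorem~\ref{thm:equi} \eqref{thm:equi.b}$\Rightarrow$\eqref{thm:equi.d} rather than spelling things out, and in the inseparable case one could alternatively conclude $L=L^\perp$ by noting $L\subseteq L^\perp$ and comparing dimensions instead of invoking the centraliser argument, but your version is equally valid.
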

\begin{proof}
From Corollary~\ref{cor:le-ri-meet}, $\leS(L)\cap\riS(L)=\{L,L^\perp\}$.
We consider Theorem~\ref{thm:equi} for $M_1=M_2:=L$, and so $L_1=L$. Then
\eqref{thm:equi.b} holds, and we can repeat the proof of
\eqref{thm:equi.b}~$\Rightarrow$~\eqref{thm:equi.d} with $M:=L$ and $g\in
L\setminus F$. If $L/F$
is separable then $g\neq \ol{g}$ and $L\neq L^\perp$. Otherwise $g=\ol{g}$ and
$L=L^\perp$.
\end{proof}

\begin{cor}\label{cor:L_fixed} Let $L$ be a maximal subfield of $H$ and
let $u\in H^*$. Then $u^{-1}L u=L$ is equivalent to $u\in (L\cup
L^\perp)\setminus\{0\}$.
\end{cor}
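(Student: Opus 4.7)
The plan is to prove both implications by imitating the Case~(i)/Case~(ii) dichotomy in the proof of Theorem~\ref{thm:equi}. For the direction $u\in (L\cup L^\perp)\setminus\{0\}\Rightarrow u^{-1}Lu=L$, the subcase $u\in L^*$ is immediate, since $L$ is commutative. For the subcase $u\in L^\perp\setminus\{0\}$, I would fix a generator $g\in L\setminus F$ so that $L=F1\oplus Fg$. Then \eqref{eq:L_1-perp} yields $u\ol{g}=gu$, whence $u^{-1}gu=\ol{g}$. Since $L$ is stable under conjugation (as $\ol{g}=(g+\ol{g})-g\in L$), I conclude $u^{-1}gu\in L$; together with $u^{-1}\cdot 1\cdot u=1\in L$ and $F$-linearity this gives $u^{-1}Lu\subseteq L$, and equality follows by comparing $F$-dimensions.

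For the reverse direction $u^{-1}Lu=L\Rightarrow u\in (L\cup L^\perp)\setminus\{0\}$, I would exploit the centrality of $F$ in $H$ to see that $\mu_{u^{-1},u}$ restricts to an $F$-algebra automorphism of $L$. With $[L:F]=2$ there are at most two such automorphisms. If this restriction is the identity, then $u$ centralises $L$; by the standard fact that the centraliser in $H$ of a maximal subfield equals that subfield, I conclude $u\in L$. If the restriction is non-trivial, then as in Case~(ii) of the proof of Theorem~\ref{thm:equi} the non-trivial $F$-automorphism must coincide with the restriction of the conjugation of $H$ to $L$; hence $u^{-1}gu=\ol{g}$, i.e., $gu=u\ol{g}$, and \eqref{eq:L_1-perp} places $u$ in $L^\perp$.

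The main delicate point I anticipate is the purely inseparable situation, which can occur only when $\Char F=2$: there the only $F$-automorphism of $L$ is the identity, so the second case in the previous paragraph cannot arise. Correspondingly, Corollary~\ref{cor:galois} gives $L=L^\perp$, so the target set $L\cup L^\perp$ collapses to $L$ and the assertion remains consistent. I would record this degeneracy explicitly to make it clear that the argument is uniform across all characteristics, and I would briefly justify the identity $Z_H(L)=L$ (either by invoking the double centraliser theorem for central simple algebras or by a direct two-line $F$-dimension count inside $H$).
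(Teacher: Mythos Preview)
Your proposal is correct and follows essentially the same approach as the paper: the paper's proof handles the forward direction identically (commutativity of $L$ for $u\in L^*$, and \eqref{eq:L_1-perp} for $u\in L^\perp\setminus\{0\}$) and for the converse simply points back to Cases~(i) and~(ii) in the proof of Theorem~\ref{thm:equi}, \eqref{thm:equi.b}~$\Rightarrow$~\eqref{thm:equi.d}, which is exactly the dichotomy you spell out. Your version is more explicit (the centraliser identity $Z_H(L)=L$, the dimension argument, and the inseparable degeneracy), but the substance is the same.
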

\begin{proof}
If $u\in L\setminus\{0\}$ then $u^{-1}L u=L$ is obviously true. If $u\in
L^\perp\setminus\{0\}$ then \eqref{eq:L_1-perp} implies $u^{-1}L u=L$. The
converse follows from cases (i) and (ii) in the proof of
Theorem~\ref{thm:equi}, \eqref{thm:equi.b}~$\Rightarrow$~\eqref{thm:equi.d}.
\end{proof}
By the theorem of Cartan-Brauer-Hua \cite[(13.17)]{lam-01a}, for each maximal
subfield $L$ of $H$ there exists a $c\in H^*$ with $c^{-1}Lc\neq L$.
Corollary~\ref{cor:L_fixed} shows how all such elements $c$ can be found.

\begin{cor}\label{cor:F-iso-lre}
Let $L_1$ and $L_2$ be maximal subfields of $H$. Then $L_1\lre L_2$ is
equivalent to $L_1\cong L_2$.
\end{cor}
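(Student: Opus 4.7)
The plan is to invoke Theorem~\ref{thm:equi} directly, taking $M_1:=L_1$ and $M_2:=L_2$. The only thing to check is that, under this choice, the auxiliary lines ``$L_1$'' and ``$L_2$'' appearing in the statement of the theorem coincide with the maximal subfields $L_1$ and $L_2$ given in the corollary.

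First I would observe that any maximal subfield of $H$ is a two-dimensional $F$-subalgebra containing the unit $1$, hence a line of $\bPH$ through the point $F1$. Since both parallelisms $\lep$ and $\rip$ are reflexive, the unique line through $F1$ that is left parallel to $M_1=L_1$ is $L_1$ itself, and the unique line through $F1$ that is right parallel to $M_2=L_2$ is $L_2$ itself. Thus the symbols $L_1,L_2$ from the corollary are exactly the symbols $L_1,L_2$ arising in the hypotheses of Theorem~\ref{thm:equi}.

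With this identification, the equivalence \eqref{thm:equi.a}~$\Leftrightarrow$~\eqref{thm:equi.c} in Theorem~\ref{thm:equi} reads precisely $L_1\lre L_2 \Leftrightarrow L_1\cong L_2$, which is the claim. There is no genuine obstacle here; the corollary is simply the specialisation of Theorem~\ref{thm:equi} to the case where both input lines already lie in the star $\cAH$.
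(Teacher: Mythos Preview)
Your argument is correct and is exactly the intended reading: the paper states Corollary~\ref{cor:F-iso-lre} without proof because it is the specialisation of Theorem~\ref{thm:equi} to $M_1:=L_1$, $M_2:=L_2\in\cAH$, and your verification that the auxiliary lines through $F1$ agree with the given $L_1,L_2$ is the only point worth making explicit.
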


All maps $\mu_{c,d}$, with $c,d$ varying in $H^*$, constitute a subgroup of the
general linear group $\GL(H)$. This subgroup acts on the line set $\cLH$
in a natural way, thereby splitting $\cLH$ into line orbits. From
Theorem~\ref{thm:equi}, these line orbits are precisely the classes of
left-right equivalent lines. The next result gives another
interpretation in terms of \emph{flags}, that is, incident point-line
pairs.
\begin{prop}\label{prop:ellmotions}
Let $(Fp_1,M_1)$ and $(Fp_2,M_2)$ be flags of the
$3$-dimensional projective space $\bPH$. There exists a map $\mu_{c,d}$, with
$c,d\in H^*$, taking $(Fp_1,M_1)$ to $(Fp_2,M_2)$ if, and only if, $M_1\lre
M_2$.
\end{prop}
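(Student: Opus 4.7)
The plan is to prove the two implications separately, with Theorem~\ref{thm:equi}\eqref{thm:equi.e} as the hinge in each direction.

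For the forward direction I would simply observe that if $\mu_{c,d}$ takes $(Fp_1,M_1)$ to $(Fp_2,M_2)$ then, in particular, $cM_1d = M_2$, i.e., $cM_1 = M_2 d^{-1}$ with $c,d^{-1}\in H^*$. Setting $e_1:=c$ and $e_2:=d^{-1}$ in condition \eqref{thm:equi.e} of Theorem~\ref{thm:equi} immediately yields $M_1\lre M_2$.

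For the converse I would build the required $\mu_{c,d}$ in two stages. Assuming $M_1\lre M_2$, Theorem~\ref{thm:equi}\eqref{thm:equi.e} supplies $e_1,e_2\in H^*$ with $e_1M_1e_2^{-1} = M_2$, so $\mu_{e_1,e_2^{-1}}$ already carries $M_1$ onto $M_2$ and sends $Fp_1$ to $Fq$, where $q:=e_1p_1e_2^{-1}\in M_2\setminus\{0\}$. It then suffices to post-compose with a map $\mu_{a,b}$ that stabilises $M_2$ set-wise and moves $Fq$ to $Fp_2$; the composition is automatically of the form $\mu_{c,d}$ with $c=ae_1$ and $d=e_2^{-1}b$. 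The natural candidate is $a:=1$ and $b:=q^{-1}p_2\in H^*$, which obviously sends $q$ to $p_2$.

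The one non-trivial point, and essentially the only obstacle, is to verify that $M_2 b = M_2$, i.e., that right multiplication by $q^{-1}p_2$ preserves the line $M_2$. I would reduce this to a short algebraic check by left-translating $M_2$ to a maximal subfield: choose $e\in H^*$ with $1\in L:=eM_2$, so that $L\in\cAH$ and $M_2=e^{-1}L$. Writing $q=e^{-1}\ell_1$ and $p_2=e^{-1}\ell_2$ with $\ell_1,\ell_2\in L\setminus\{0\}$, one finds
\begin{equation*}
q^{-1}p_2 \;=\; \ell_1^{-1}\ell_2 \;\in\; L\setminus\{0\},
\end{equation*}
and hence $M_2\,b = e^{-1}L\,\ell_1^{-1}\ell_2 = e^{-1}L = M_2$, because $L$ is multiplicatively closed. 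Combining the two stages produces the desired $\mu_{c,d}$, with $c=e_1$ and $d=e_2^{-1}q^{-1}p_2$ in $H^*$, that carries the whole flag $(Fp_1,M_1)$ to $(Fp_2,M_2)$.
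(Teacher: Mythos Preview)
Your proof is correct. The implication ``$\mu_{c,d}$ exists $\Rightarrow$ $M_1\lre M_2$'' is handled exactly as in the paper. For the other implication the paper takes a shorter route: instead of condition~\eqref{thm:equi.e} it invokes condition~\eqref{thm:equi.f} of Theorem~\ref{thm:equi} with the \emph{specific} maximal subfields $L_1:=p_1^{-1}M_1$ and $L_2:=M_2p_2^{-1}$ determined by the flag points themselves. Given $e\in H^*$ with $e^{-1}L_1e=L_2$, the choice $c:=e^{-1}p_1^{-1}$, $d:=ep_2$ works in one stroke, because the inner automorphism sends $1\in L_1$ to $1\in L_2$ and so the point condition comes for free after pre- and post-composing with the obvious translations. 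Your two-stage construction is sound but trades this for the extra verification that the right translation by $q^{-1}p_2$ stabilises $M_2$; in effect you are re-deriving by hand a special case of what condition~\eqref{thm:equi.f} packages more neatly.
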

\begin{proof}
First, note that there always exists the left translation
$\mu_{p_1^{-1},1}$ taking $Fp_1$ to $F1$ and the right translation
$\mu_{1,p_2^{-1}}$ taking $Fp_2$ to $F1$. So, $L_1:=p_1^{-1}M_1 $ and
$L_2:=M_2p_2^{-1}$ are the uniquely determined lines appearing in
Theorem~\ref{thm:equi}.
\par
From Theorem~\ref{thm:equi}, $M_1\lre M_2$ implies $e^{-1}L_1e=L_2$ for some
$e\in H^*$. Letting $c:=e^{-1}p_1^{-1}$ and $d:=ep_2$ gives $cp_1d=p_2$ and
$cM_1d = M_2$, that is, the map $\mu_{c,d}$ has the required properties.
\par
Conversely, if $\mu_{c,d}$ takes $(Fp_1,M_1)$ to $(Fp_2,M_2)$ then
$cM_1=M_2d^{-1}$ forces $M_1\lre M_2$ according to Theorem~\ref{thm:equi}.
\end{proof}

\begin{rem}
The group $\Gamma$ of all collineations of $\bPH$ that preserve both the left
and the right parallelism was described in \cite[Thm.~1]{pian-87b} in terms of
the factor group $H^*/F^*$, which thereby serves as a model for the point set
$\bPH$ by identifying $F^*c$ with $Fc$ for all $c\in H^*$. By \cite[Prop.~4.1
and Prop.~4.2]{pz90-coll}, a collineation $\gamma$ of $\bPH$ belongs to
$\Gamma$ if, and only if, $\gamma$ can be induced by an $F$-semilinear
transformation of $H$ that is the product of an automorphism of the skew field
$H$ and a map $\mu_{c,d}$ for some $c,d\in H^*$ (see also \cite[Thm. 4.3]{blunck+k+s+s-17z}).

In particular, the maps $\mu_{c,d}$ induce exactly the $F$-linear part of the
group $\Gamma$. If $\Char{F}\neq2$ then we know by \cite[(4.16)]{karz+p+s-93a}
that they induce precisely the proper motions of the elliptic $3$-space
$\bPH$, so the classes of left-right equivalent lines turn out to be the line
orbits under the action of the elliptic proper motion group.
\end{rem}

%

The following result describes \emph{all} Clifford-like parallelisms of
$\dspH$.

\begin{thm}\label{thm:Clike}
Let $\cAH$ be the subset of all lines of\/ $\bPH$ through the point $F1$.
\begin{enumerate}
\item\label{thm:Clike.a} Upon choosing any subset $\cD$ of $\cAH$ we let
\begin{equation}\label{eq:F}
    \cF:= \bigcup_{X\in\cD,\,c\in H^*} c^{-1} X c .
\end{equation}
Then
\begin{equation}\label{eq:Pi_F}
    \Pi_\cF:=\bigl\{\leS(L)\mid L\in \cF\bigr\}\cup \bigl\{\riS(L)\mid L\in \cAH\setminus\cF \bigr\}
\end{equation}
is the set of parallel classes of a Clifford-like parallelism
$\parallel_\cF$, say, of the projective double space $\dspH$.

\item\label{thm:Clike.b} Conversely, any Clifford-like parallelism\/
    $\parallel$ of\/ $\dspH$ arises according to \eqref{thm:Clike.a} from
    at least one subset of $\cAH$.
\item\label{thm:Clike.c} Let $\cD$ and $\tilde\cD$ be subsets of
    $\cAH$. Applying the construction from\/ \eqref{thm:Clike.a} to
    $\cD$ and $\tilde\cD$ gives parallelisms\/ $\parallel_\cF$ and
    $\parallel_{\tilde\cF}$, respectively. Then\/ $\parallel_\cF$ coincides
    with\/ $\parallel_{\tilde\cF}$ if, and only if, $\cF=\tilde\cF$.
\end{enumerate}
\end{thm}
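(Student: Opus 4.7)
The plan is to deduce Theorem~\ref{thm:Clike} from the general structural result Proposition~\ref{prop:all_blends}, applied with ${\relpi1}={\lep}$ and ${\relpi2}={\rip}$ so that ${\relpi{12}}={\lre}$, using Theorem~\ref{thm:equi} as the dictionary between the abstract language of $\lre$-classes and the algebraic language of $H^*$-conjugation. The whole proof rests on one bookkeeping identity: for $X\in\cAH$ the intersection $\cAH\cap\lrC(X)$ equals $\{c^{-1}Xc\mid c\in H^*\}$, which follows at once from the equivalence \ref{thm:equi.a}$\Leftrightarrow$\ref{thm:equi.f} of Theorem~\ref{thm:equi} applied to lines through $F1$.

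For part~\eqref{thm:Clike.a}, I would set $\cB:=\bigcup_{X\in\cD}\lrC(X)$, which is precisely the set produced by Proposition~\ref{prop:all_blends}\eqref{prop:all_blends.a} from $\cD$. The identity above yields $\cF=\cAH\cap\cB$. Since $\cB$ is a union of entire $\lre$-classes (hence of entire $\lep$- and $\rip$-classes of its members) and every $\lep$- and every $\rip$-class contains a unique representative in $\cAH$, I would verify
\[
\{\leS(M)\mid M\in\cB\}=\{\leS(L)\mid L\in\cF\},\qquad
\{\riS(M)\mid M\in\cL\setminus\cB\}=\{\riS(L)\mid L\in\cAH\setminus\cF\},
\]
so that $\Pi_\cF=\Pi_\cB$. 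Proposition~\ref{prop:all_blends}\eqref{prop:all_blends.a} and Theorem~\ref{thm:par}\eqref{prop:par.a} then guarantee that $\parallel_\cF$ is a parallelism and, by construction, Clifford-like.

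For part~\eqref{thm:Clike.b}, given any Clifford-like parallelism $\parallel$, Theorem~\ref{thm:par}\eqref{prop:par.b} gives that $\parallel$ is a blend of $\lep$ and $\rip$, and Proposition~\ref{prop:all_blends}\eqref{prop:all_blends.b} supplies a (possibly non-star-shaped) $\cD'\subseteq\cLH$ producing it. I would then replace $\cD'$ by $\cD:=\{L\in\cAH\mid L\lre X\text{ for some }X\in\cD'\}\subseteq\cAH$. Since every line in $\cLH$ is $\lep$-equivalent, and so $\lre$-equivalent, to its unique representative in $\cAH$, the set $\cB$ is unchanged under this replacement; part~\eqref{thm:Clike.a} applied to this $\cD$ therefore recovers $\parallel$. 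For part~\eqref{thm:Clike.c}, I would apply Proposition~\ref{prop:all_blends}\eqref{prop:all_blends.c}, noting that the exceptional set $\cL_{12}$ appearing there is empty by Corollary~\ref{cor:le-neq-ri}. The equality of the two blends thus collapses to $\{\lrC(X)\mid X\in\cD\}=\{\lrC(\tilde X)\mid \tilde X\in\tilde\cD\}$, and the equivalence $L_1\lre L_2\Leftrightarrow \exists e\in H^*\colon L_2=e^{-1}L_1e$ (for $L_1,L_2\in\cAH$) translates this directly into $\cF=\tilde\cF$.

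The main obstacle is not conceptual but organisational: one has to keep careful track of the passage between the ``all of $\cLH$'' viewpoint of Proposition~\ref{prop:all_blends} and the ``star $\cAH$'' viewpoint of Theorem~\ref{thm:Clike}, and in particular verify that $\cF$, which a priori is defined only as a union of inner-conjugates, really picks out every line of $\cAH$ lying in the corresponding $\lre$-classes. Once Theorem~\ref{thm:equi} is invoked to identify $\lre$-classes of lines in $\cAH$ with $H^*$-conjugacy classes of maximal subfields, every remaining step is a straightforward translation.
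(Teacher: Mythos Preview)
Your proposal is correct and follows essentially the same route as the paper: you apply Proposition~\ref{prop:all_blends} with ${\relpi1}={\lep}$, ${\relpi2}={\rip}$, use Theorem~\ref{thm:equi} (equivalently Corollary~\ref{cor:F-iso-lre}) to identify $\lrC(X)\cap\cAH$ with the $H^*$-conjugacy class of $X$, and invoke Corollary~\ref{cor:le-neq-ri} to make the exceptional set $\cL_{12}$ vanish in part~\eqref{thm:Clike.c}. The paper's argument is organised identically; the only cosmetic difference is that in part~\eqref{thm:Clike.b} the paper appeals to Proposition~\ref{prop:all_blends}\eqref{prop:all_blends.c} to justify replacing $\tilde\cD$ by the star-representative set, whereas you observe directly that the resulting $\cB$ is unchanged---both justifications are equally valid.
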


\begin{proof}
\eqref{thm:Clike.a} We apply the construction from
Proposition~\ref{prop:all_blends}~\eqref{prop:all_blends.a} to $\cD$; thereby
we replace $\relpi{1}$ and $\relpi{2}$ with $\lep$ and $\rip$, respectively.
So, starting with $\cB:=\bigcup_{X\in\cD} \lrC(X)$, we finally arrive at the
partition $\Pi_\cB$ from \eqref{eq:Pi_B}, whose associated equivalence relation
on $\cLH$ is a blend of $\lep$ and $\rip$. By Theorem~\ref{thm:par}, this
$\Pi_\cB$ is the set of parallel classes of a Clifford-like parallelism of
$\dspH$. Each of its parallel classes has a unique line in common with $\cAH$.
Therefore
\begin{equation}\label{eq:Pi_B_new}
        \Pi_\cB=  \bigl\{\leS(L)\mid L\in \cB\cap\cAH\bigr\}
        \cup      \bigl\{\riS(L)\mid L\in \cAH\setminus\cB\bigr\}.
    \end{equation}
Theorem~\ref{thm:equi} and Corollary~\ref{cor:F-iso-lre} show that
\begin{equation*}
    \forall\; X\in \cAH \colon \{c^{-1}Xc \mid c\in H^*\} = \{Y\in\cAH\mid X\cong Y\} =  \lrC(X)\cap\cAH .
\end{equation*}
So, substituting in \eqref{eq:F} gives
\begin{equation}\label{eq:F_new}
    \cF = \bigcup_{X\in\cD} \Bigl(\lrC(X)\cap\cAH\Bigr) =\cB\cap\cAH .
\end{equation}
Now, by comparing \eqref{eq:Pi_F} with \eqref{eq:Pi_B_new}, we obtain
$\Pi_\cF=\Pi_\cB$.

\par
\eqref{thm:Clike.b} The given parallelism $\parallel$ is a blend of $\lep$ and
$\rip$ by Theorem~\ref{thm:par}. Thus $\parallel$ allows a construction as
described in Proposition~\ref{prop:all_blends}~\eqref{prop:all_blends.a} using
$\lep$, $\rip$, and some subset, say $\tilde\cD$, of $\cLH$. Replacing
$\tilde\cD$ with the set
\begin{equation}\label{eq:D}
    \cD:= \Bigl( \bigcup_{X\in\widetilde{\cD}}\lrC(X) \Bigr) \cap \cAH
\end{equation}
does not alter this result, as has been pointed out in
Proposition~\ref{prop:all_blends}~\eqref{prop:all_blends.c}. The first part of
the current proof shows that we also get the parallelism $\parallel$ by
applying the construction from \eqref{thm:Clike.a} to the set $\cD$ from
\eqref{eq:D}.
\par
\eqref{thm:Clike.c} By the first part of the proof, we obtain $\parallel_\cF$
and $\parallel_{\tilde\cF}$ from $\cD$ and $\tilde\cD$, respectively, also via
the construction in
Proposition~\ref{prop:all_blends}~\eqref{prop:all_blends.a}. In our current
setting the condition \eqref{eq:coinc} simplifies to
\begin{equation}\label{eq:lr_coinc}
    \{\cC_{\ell r}(X) \mid X\in \cD \} = \{\cC_{\ell r}(\tilde X) \mid \tilde X\in \tilde \cD \},
\end{equation}
since $\cLH_{\ell r}=\emptyset$ by Corollary~\ref{cor:le-neq-ri}. From
\eqref{eq:F_new}, equation \eqref{eq:lr_coinc} is equivalent to
$\cF=\tilde\cF$. It therefore suffices to make use of
Proposition~\ref{prop:all_blends}~\eqref{prop:all_blends.c}, with
\eqref{eq:coinc} to be replaced by $\cF=\tilde\cF$, in order to complete the
proof.
\end{proof}

\begin{rem} Theorem~\ref{thm:Clike}~\eqref{thm:Clike.a} was sketched without
a strict proof in \cite[4.13]{blunck+p+p-10a}. However, there are some formal
differences to our approach, as we avoid the indicator lines of regular spreads
that have been used there. Our set of lines $\cAH$ is, from an algebraic point
of view, the family of all quadratic extensions $L$ of $F$ with $F\subseteq L
\subseteq H$ from \cite[4.13]{blunck+p+p-10a}. In this way, our $\cF$ turns
into a family of subfields of $H$. Equation \eqref{eq:F} guarantees that
no subfield in $\cF$ is $F$-isomorphic to a subfield in
$\cAH\setminus\cF$. The latter condition is mentioned in the sketch of proof
from \cite[4.13]{blunck+p+p-10a}, but is missing there at that point,
where the family $\cF$ is fixed for the first time. ($F$-isomorphic
subfields of $H$ are termed as being ``conjugate'' in \cite{blunck+p+p-10a}.)
\end{rem}

Below we shall make use of the \emph{ordinary quaternion algebra} over a
\emph{formally real} field $F$, \emph{i.e.} $-1$ is not a square in $F$. This
kind of algebra will be denoted as $(K/F,-1)$. According to
\cite[pp.~46--48]{tits+w-02a} it arises (up to $F$-isomorphism) in the
following way. The field $F$ is extended to $K:=F(i)$, where $i$ is a square
root of $-1\in F$. One defines $(K/F,-1)$ as the subring of the ring of
$2\x 2$ matrices over $K$ consisting of all matrices
\begin{equation*}
    \begin{pmatrix}
    \XX x_1+i x_2 & y_1+iy_2\\
    -y_1+iy_2 & x_1-ix_2
    \end{pmatrix} \mbox{~~with~~}x_1,x_2,y_1,y_2\in F
\end{equation*}
and identifies any $x\in F$ with the matrix $\diag(x,x)\in (K/F,-1)$.
The $F$-algebra $(K/F,-1)$ is a skew field if, and only if, $-1$ is not a sum
of two squares in $F$. If the latter condition applies then $(K/F,-1)$ is
called the \emph{ordinary quaternion skew field} over $F$. For example, an
ordinary quaternion skew field exists over any formally real Pythagorean field.
We recall that a field is \emph{Pythagorean} when the sum of any two squares
is a square as well (see \emph{e.g.} \cite[p. 204]{ksw-73}).

\begin{thm}\label{thm:ordi}
Let $H$ be a quaternion skew field with centre $F$. Then the following are
equivalent.
\begin{enumerate}
\item\label{thm:ordi.a} $F$ is a formally real Pythagorean field, and $H$
    is the ordinary quaternion skew field over $F$.
\item\label{thm:ordi.b} All maximal subfields of $H$ are mutually
    $F$-isomorphic.

\item\label{thm:ordi.c} The Clifford parallelisms\/ $\lep$ and\/ $\rip$ are
    the only Clifford-like parallelisms of the projective double
    space\/ $\dspH$.
\end{enumerate}
\end{thm}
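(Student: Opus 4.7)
The plan is to establish $(b) \Leftrightarrow (c)$ directly from Theorem~\ref{thm:Clike}, and to treat $(a) \Leftrightarrow (b)$ as a purely algebraic statement about the maximal subfields of $H$.

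For $(b) \Leftrightarrow (c)$, Theorem~\ref{thm:Clike} together with Corollary~\ref{cor:F-iso-lre} identifies the Clifford-like parallelisms of $\dspH$ with subsets $\cF \subseteq \cAH$ that are unions of $F$-isomorphism classes of maximal subfields, and by part~(c) of that theorem distinct such $\cF$ give distinct parallelisms. Since $\lep$ corresponds to $\cF = \cAH$ and $\rip$ to $\cF = \emptyset$, hypothesis $(b)$ forces these to be the only admissible $\cF$, yielding $(c)$. Conversely, if $L_1 \not\cong L_2$ lie in $\cAH$, then the choice $\cD := \{L_1\}$ produces an intermediate $\cF$ and hence a Clifford-like parallelism distinct from both $\lep$ and $\rip$, violating $(c)$.

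For $(a) \Rightarrow (b)$, in $H = (K/F,-1)$ the pure-quaternion norm form reads $N(xi + yj + zk) = x^2 + y^2 + z^2$ in the standard basis. Every maximal subfield has the shape $L = F(p)$ for some trace-zero $p$ with $p^2 = -N(p)$; the Pythagorean hypothesis gives $N(p) = s^2$ for some $s \in F^*$, whence $(p/s)^2 = -1$ and $L \cong F(i) = K$. For $(b) \Rightarrow (a)$, assuming $\Char F \neq 2$ (the only characteristic compatible with~$(a)$), let $V$ be the space of trace-zero quaternions and $N$ the restriction of the quaternion norm, which is anisotropic because $H$ is a skew field. Hypothesis~$(b)$ forces $N(V^*)$ to lie in a single coset $d(F^*)^2$ of the squares. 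Picking a basis $e_1, e_2, e_3$ of $V$ orthogonal with respect to $\lire$ and rescaling, one may arrange $e_i^2 = -d$ with $e_i e_j = -e_j e_i$ for $i \neq j$. Then $e_1 e_2$ is again pure (since $\overline{e_1 e_2} = e_2 e_1 = -e_1 e_2$) and $(e_1 e_2)^2 = -d^2$, so $N(e_1 e_2) = d^2 \in d(F^*)^2$, forcing $d \in (F^*)^2$. A further rescaling yields $e_i^2 = -1$, so that $H = (K/F,-1)$. The Pythagorean property then follows by evaluating $N$ on slices $X e_1 + Y e_2$, and $F$ is formally real because otherwise $-1 = t^2$ in $F$ would make $e_1 + t e_2$ a nonzero isotropic vector of $N$, contradicting anisotropy.

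The principal obstacle will be the square-detection step for $d$ inside $(b) \Rightarrow (a)$, since this is exactly where one must bring in the multiplicative structure of $H$ to supplement the purely quadratic-form information coming from the hypothesis on maximal subfields. Characteristic~$2$ is incompatible with~$(a)$ from the outset and should be dispatched by a separate, shorter argument verifying that both $(b)$ and $(c)$ also fail in that case.
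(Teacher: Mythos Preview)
Your argument for $(b)\Leftrightarrow(c)$ is exactly the paper's: both reduce the question, via Theorem~\ref{thm:Clike} (and implicitly Corollary~\ref{cor:F-iso-lre}), to whether $\cAH$ decomposes into more than one $F$-isomorphism class, so that $\cF\in\{\emptyset,\cAH\}$ are the only admissible choices precisely when~(b) holds.

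For $(a)\Leftrightarrow(b)$ you depart from the paper. The paper simply quotes the result of Fein and Schacher \cite{fein+s-76a} (with a pointer to \cite{blunck+k+s+s-17z} for a generalisation) and moves on. Your route is a self-contained quadratic-forms computation on the pure part $V=1^\perp$: the key step---observing that $e_1e_2$ is again pure with $N(e_1e_2)=d^2$, forcing the common value class $d(F^*)^2$ to be the class of squares---is the genuinely nontrivial point, and it is correct. What you gain is independence from an external reference and an explicit mechanism (the multiplicativity of the norm on anticommuting pure elements) explaining \emph{why} the Pythagorean ordinary quaternions are singled out; what the paper gains is brevity. Both are legitimate.

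Two places where your sketch should be tightened before it counts as a proof. First, the characteristic-$2$ disposal is only asserted: you should actually note that any quaternion skew field in characteristic~$2$ contains both a separable maximal subfield (an Artin--Schreier extension) and an inseparable one (a purely inseparable quadratic extension), so that~(b) fails outright; the paper's example immediately following the theorem supplies exactly this. Second, in $(a)\Rightarrow(b)$ you use that a sum of \emph{three} squares is a square in a Pythagorean field; this is of course immediate by iterating the two-square property, but since the definition only mentions two squares it is worth saying so explicitly.
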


\begin{proof}
\eqref{thm:ordi.a} $\Leftrightarrow$ \eqref{thm:ordi.b}. This was established
in \cite[Thm.~1 and Lemma~1]{fein+s-76a} (but note that the definition of
Pythagorean field used there is slightly different from ours). See also \cite[Thm. 9.1]{blunck+k+s+s-17z} for a proof in a more general situation.
\par
\eqref{thm:ordi.b} $\Leftrightarrow$ \eqref{thm:ordi.c}. From
Theorem~\ref{thm:equi}, all maximal subfields of $H$ are $F$-isomorphic if, and
only if, for all $L\in\cA(H)$ we have $\cA(H)=\bigcup_{c\in H^*} c^{-1}Lc$. The
last equation holds precisely when there are only two possibilities for the set
$\cF$ appearing in \eqref{eq:F}, namely either $\cF=\cA(H)$ or $\cF=\emptyset$.
This in turn is equivalent, by Theorem~\ref{thm:Clike}, to saying that $\lep$
and $\rip$ are the only Clifford-like parallelisms of $\dspH$.
\end{proof}

We continue by giving some explicit examples of Clifford-like parallelisms
using the construction from Theorem~\ref{thm:Clike}~\eqref{thm:Clike.a}.

\begin{exa}
Let $\Char H=2$. We define
\begin{equation*}
    \cD:=\{L\in\cAH\mid L \mbox{~is a separable extension of~} F \} .
\end{equation*}
The set $\cAH\setminus\cF$ comprises precisely the inseparable quadratic
extensions of $F$ that are contained in $H$. We get $\cF=\cD$, since the group
of inner automorphisms of $H$, in its natural action on $\cAH$, leaves both
$\cD$ and $\cAH\setminus\cD$ invariant. Both $\cF$ and $\cAH\setminus \cF$ are
non-empty; see, among others, \cite[pp.~103--104]{draxl-83} or
\cite[pp.~46--48]{tits+w-02a}. So $\cD$ gives rise to a Clifford-like
parallelism of $\dspH$ other than $\lep$ and $\rip$.
\end{exa}

\begin{exa}[{see \cite[4.12]{blunck+p+p-10a}}]
Let $H$ be the ordinary quaternion skew field over the field $\bQ$ of
rational numbers. Then each quadratic field extension $\bQ(\sqrt{-q})$ with
$q\in\bQ^*$ sum of three squares appears as a subfield of $H$.
Any two such extensions $\bQ(\sqrt{-q_1})$ and $\bQ(\sqrt{-q_2})$ are
$\bQ$-isomorphic if, and only if, $q_1$ and $q_2$ are in the same square
class of $\bQ^*$, \emph{i.e.}, there exists $c\in\bQ^*$ such that
$q_1=c^2q_2$. Since we have many non $\bQ$-isomorphic quadratic
extensions of $\bQ$ contained in $H$, we also have many possible choices for the set $\cF$ and
consequently many different Clifford-like parallelisms.
\end{exa}

Take notice that Clifford-like parallelisms of $\dspH$ always come in pairs. We
just have to change the roles of $\cF$ and $\cAH\setminus\cF$ in
\eqref{eq:Pi_F}. However, with two obvious exceptions, the two parallelisms of
such a pair do not make $\bPH$ into a projective double space. This follows
from our final theorem, which contains an even stronger result.
\begin{thm}\label{thm:really_new}
Let\/ $\parallel$ be a Clifford-like parallelism of $\dspH$ other than\/ $\lep$
and\/ $\rip$. Then there is no parallelism\/ $\parallel'$ on $\bPH$ that makes
$\bPH$ into a projective double space\/
$\bigl(\bPH,{\parallel},{\parallel'}\bigr)$.
\end{thm}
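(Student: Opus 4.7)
The plan is to argue by contradiction: suppose a parallelism $\parallel'$ exists on $\bPH$ such that $(\bPH,\parallel,\parallel')$ is a projective double space. Using Theorem~\ref{thm:Clike} I will write $\parallel=\parallel_\cF$ for some $\cF\subseteq\cAH$ with $\emptyset\neq\cF\neq\cAH$, pick $L_{1}\in\cF$ and $L_{2}\in\cAH\setminus\cF$ together with generators $g_{i}\in L_{i}\setminus F$ for $i=1,2$, and note that $L_{1}\cap L_{2}=F\cdot 1$. The principal task is the case $\parallel\neq\parallel'$; the residual case $\parallel=\parallel'$ will be disposed of separately.

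In the main case I will invoke the classification of non-degenerate projective double spaces recalled in Section~\ref{se:parallelisms} to obtain a quaternion skew field $H_{1}$ sharing the underlying $F$-vector space of $H$ such that $\parallel$ and $\parallel'$ are the left and right Clifford parallelisms of $H_{1}$, respectively; after an $H$-translation I may assume that the identities of $H$ and $H_{1}$ coincide. Writing $\star$ for the multiplication of $H_{1}$ and $\cdot$ for that of $H$, the central observation is that for every $c\in H^{*}$ the $F$-linear bijection $x\mapsto c\star x$ fixes every $\parallel$-class setwise. Since $L_{1}\in\cF$ has $\parallel$-class $\leS(L_{1})$, $L_{2}\in\cAH\setminus\cF$ has $\parallel$-class $\riS(L_{2})$, and $c\star 1=c$, I will deduce the set equalities $c\star L_{1}=c\cdot L_{1}$ and $c\star L_{2}=L_{2}\cdot c$. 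Expanding $c\star g_{1}$ and $c\star g_{2}$ in the bases $\{c,cg_{1}\}$ and $\{c,g_{2}c\}$ derived from these equalities, then exploiting $F$-bilinearity of $\star$ together with the normalisation $1\star g_{i}=g_{i}$, I plan to pin down all expansion coefficients and extract the pointwise identities
\[c\star x=c\cdot x\ (x\in L_{1}),\qquad c\star x'=x'\cdot c\ (x'\in L_{2}),\qquad c\in H.\]

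The contradiction will come from $y:=g_{1}+g_{2}$: since $L_{1}\cap L_{2}=F\cdot 1$ and $g_{i}\notin F$, the element $y$ lies outside $L_{1}\cup L_{2}$, so $L_{3}:=F+Fy$ is a new line in $\cAH$ which belongs either to $\cF$ or to $\cAH\setminus\cF$. Applying the above analysis to $L_{3}$ gives $c\star y=c\cdot y$ in the first case and $c\star y=y\cdot c$ in the second, whereas $F$-linearity of $c\star(\cdot)$ yields $c\star y=c\cdot g_{1}+g_{2}\cdot c$. Comparing the two expressions forces $c\cdot g_{2}=g_{2}\cdot c$ or $c\cdot g_{1}=g_{1}\cdot c$ for every $c\in H^{*}$, placing $g_{1}$ or $g_{2}$ in $Z(H)=F$ and contradicting the choice of generators. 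For the residual case $\parallel=\parallel'$, which falls outside the quaternion classification, I use the fact that $|\cF|\geq 2$ (Cartan--Brauer--Hua ensures that each $F$-conjugacy class in $\cAH$ has at least two elements) to pick a second line $L_{1}'\in\cF\setminus\{L_{1}\}$ and apply axiom~(DS) to a triangle whose two joins $p_{0}p_{1}$ and $p_{0}p_{2}$ both lie in $\cF$: both of the required parallels are then left Clifford parallels, and the non-commutativity of $H$ lets me arrange generators so that these two lines are skew in $\bPH$, contradicting axiom~(DS). The chief technical hurdle I anticipate is establishing the pointwise identities when $c$ happens to lie in the subfield $L_{i}$ itself, where the naive linear-independence argument used to force $\alpha,\beta$ to be independent of $c$ degenerates; I plan to overcome this by replacing $c$ with $c+c'$ for an auxiliary generic $c'\notin L_{i}$ and exploiting additivity of $\star$ in its first argument.
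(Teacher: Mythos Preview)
Your main case ($\parallel\neq\parallel'$) is correct and takes a genuinely different route from the paper. The paper argues synthetically: it produces lines $L_{1},L_{3}\in\cAH$ with $\cS(L_{1})=\leS(L_{1})$ and $\cS(L_{3})=\riS(L_{3})$, then uses the double-space axiom in both $\bigl(\bPH,\parallel,\parallel'\bigr)$ and $\dspH$ to identify the reguli $\cR(L_{1},L_{3})$ with $\cR_{\ell}(L_{1},L_{3})$ and $\cR'(L_{3},L_{1})$ with $\cR_{r}(L_{3},L_{1})$; the contradiction comes from the cardinality bound $|\cS(L_{3})\cap\cS'(L_{3})|\leq 2$ of Theorem~\ref{thm:equi}. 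Your approach is algebraic instead: you transport the second quaternion structure $\star$ onto the vector space $H$, derive the pointwise identities $c\star x=cx$ on $L_{1}$ and $c\star x'=x'c$ on $L_{2}$ by an additivity argument, and reach a contradiction via the auxiliary line $L_{3}=F+F(g_{1}+g_{2})$. This is an elegant alternative; the only point worth tightening is your phrase ``after an $H$-translation'': a left $H$-translation $\mu_{c,1}$ certainly preserves $\parallel$ but need not preserve $\parallel'$. The clean fix is to precompose the coordinatising semilinear bijection $\Phi\colon H_{1}\to H$ with a left $H_{1}$-translation so that $\Phi(1_{H_{1}})=1$; this keeps $\Phi$ an isomorphism of double spaces and forces the centre of $(H,\star)$ to equal $F$ with the same scalar action, which is what your bilinearity argument needs.

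Your residual case ($\parallel=\parallel'$), however, is only sketched, and the missing step is not as light as ``non-commutativity of $H$''. You must produce $p_{1}\in L_{1}\setminus F$ and $p_{2}\in L_{1}'\setminus F$ with $p_{2}L_{1}$ and $p_{1}L_{1}'$ skew, which amounts to finding an element of $L_{1}L_{1}'\setminus L_{1}'L_{1}$. That $L_{1}L_{1}'\neq L_{1}'L_{1}$ is not obvious from mere non-commutativity (for instance $p_{1}=g_{1}$, $p_{2}=g_{1}'$ can fail even when $g_{1}g_{1}'\neq g_{1}'g_{1}$). One way to prove it is to note that $L_{1}L_{1}'=\bigcup\cR_{\ell}(L_{1}',L_{1})$ and $L_{1}'L_{1}=\bigcup\cR_{r}(L_{1}',L_{1})$, show via axiom~(DS) in $\dspH$ that both $\cR_{\ell}(L_{1}',L_{1})$ and $\cR_{r}(L_{1}',L_{1})$ are reguli, and then rule out their quadrics coinciding using Corollary~\ref{cor:le-ri-meet} and the disjointness of distinct right parallel classes. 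But this regulus argument is essentially the paper's own treatment of the case $\parallel=\parallel'$, so your residual case, once made rigorous, converges with the paper's method rather than offering an independent one.
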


\begin{proof}
We assume, to the contrary, that there is a projective double space
$\bigl(\bPH,{\parallel},{\parallel'}\bigr)$. Also, for all $M,N\in\cLH$ let $\cR_\ell(M,N)$ denote the set of all
lines in $\leS(M)$ that have at least one common point with $N$. The sets
$\cR_r(M,N)$, $\cR(M,N)$, and $\cR'(M,N)$ are defined in an analogous way by
replacing $\lep$ with $\rip$, $\parallel$, and $\parallel'$, respectively.
\par
We claim that there exist three distinct lines $L_1$, $L_2$, $L_3$ through the
point $F1$ such that
\begin{equation}\label{eq:M123}
   \leS(L_1)=\cS(L_1),~~ \leS(L_2)=\cS(L_2),~~ \riS(L_3)=\cS(L_3) .
\end{equation}
Indeed, the existence of $L_1$ and $L_3$ is clear from $\parallel$ being
different from $\lep$ and $\rip$. By Corollary~\ref{cor:L_fixed}, $L_2$
can be chosen as $L_2:=c^{-1}L_1c$ with $c\in H^*\setminus(L_1\cup L_1^\perp)$.
We distinguish two cases.
\par
\emph{Case} (i). The parallelisms ${\parallel}$ and ${\parallel'}$ coincide. In
$\bigl(\bPH,{\parallel},{\parallel}\bigr)$ the double space axiom holds. This
gives that each line of $\cR(L_1,L_2)$ has a point in common with each line of
$\cR(L_2,L_1)$. Since the lines of $\cR(L_1,L_2)$ are mutually skew, we obtain
that $\cR(L_1,L_2)$ and $\cR(L_2,L_1)$ are mutually opposite reguli. The same
kind of reasoning in $\dspH$ gives that $\cR_\ell(L_1,L_2)$ and
$\cR_r(L_2,L_1)$ are mutually opposite reguli. From the first equation in
\eqref{eq:M123}, $\cR_\ell(L_1,L_2)=\cR(L_1,L_2)$ and, since a regulus has a
unique opposite regulus, $\cR_r(L_2,L_1)=\cR(L_2,L_1)$. The second equation in
\eqref{eq:M123} gives $\cR_r(L_2,L_1)\subseteq\leS(L_2)$. By
Theorem~\ref{thm:equi}, $\leS(L_2)\cap\riS(L_2)$ contains at most two lines,
whereas the cardinality of the regulus
$\cR_r(L_2,L_1)\subseteq\leS(L_2)\cap\riS(L_2)$ is $|F|+1$. So, this case is
impossible.

\emph{Case} (ii). The parallelisms ${\parallel}$ and ${\parallel'}$ are different.
We proceed like before and obtain in a first step that
$\cR_\ell(L_1,L_3)=\cR(L_1,L_3)$ and $\cR_r(L_3,L_1)=\cR'(L_3,L_1)$ are
mutually opposite reguli. The third equation in \eqref{eq:M123} gives
$\cR'(L_3,L_1)\subseteq\cS(L_3)$. Taking into account that
$\bigl(\bPH,{\parallel},{\parallel'}\bigr)$ admits a description in terms of
some quaternion skew field, we apply Theorem~\ref{thm:equi} and get
$|\cS(L_3)\cap\cS'(L_3)|\leq 2$, whereas
$\cR'(L_3,L_1)\subseteq\cS(L_3)\cap\cS'(L_3)$ has $|F|+1$ elements, a
contradiction.
\end{proof}

As a consequence of Theorems \ref{thm:ordi} and \ref{thm:really_new}, we obtain
the following:
\begin{cor}
A projective double space $\dspH$, where $H$ does not satisfy condition
\eqref{thm:ordi.a} from Theorem \ref{thm:ordi}, admits Clifford-like
parallelisms that are not Clifford w.r.t.\ any double space structure on $\bP$.
\end{cor}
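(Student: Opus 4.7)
The plan is a short composition of the two preceding theorems; no additional ingredient is needed.

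First I would invoke Theorem~\ref{thm:ordi}. Since $H$ violates condition~\eqref{thm:ordi.a}, the equivalent condition~\eqref{thm:ordi.c} must also fail. Thus there exists a Clifford-like parallelism $\parallel$ of $\dspH$ that differs from both $\lep$ and $\rip$. (Explicit witnesses have, in fact, been constructed in the two examples immediately preceding this corollary via Theorem~\ref{thm:Clike}: for instance, in characteristic $2$ the separability construction, and over $\bQ$ the use of distinct square classes.)

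Next I would apply Theorem~\ref{thm:really_new} to this $\parallel$: no parallelism $\parallel'$ on $\bPH$ can make $(\bPH,{\parallel},{\parallel'})$ into a projective double space. By the definition given at the beginning of Section~\ref{se:parallelisms} (axiom (DS)), a parallelism on $\bP$ qualifies as a Clifford parallelism of some projective double space structure on $\bP$ precisely when it can be paired with a second parallelism so as to satisfy (DS). Hence the parallelism $\parallel$ just produced is a Clifford-like parallelism of $\dspH$ which is not Clifford with respect to \emph{any} double space structure on $\bP$, settling the claim. Because all of the substantive content resides in Theorems~\ref{thm:ordi} and~\ref{thm:really_new}, no step of this deduction poses a genuine obstacle; the only minor subtlety is verifying that ``Clifford w.r.t.\ some double space structure'' unfolds to exactly the hypothesis ruled out by Theorem~\ref{thm:really_new}, and this is immediate from the definition of projective double space.
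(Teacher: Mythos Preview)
Your proposal is correct and is precisely the argument the paper has in mind: the corollary is stated there without proof, merely as ``a consequence of Theorems~\ref{thm:ordi} and~\ref{thm:really_new}'', and you have spelled out exactly that deduction. The only point you might make explicit is that if $(\bPH,{\parallel'},{\parallel})$ were a double space then so would $(\bPH,{\parallel},{\parallel'})$ be, by the evident symmetry of axiom~(DS) under swapping $p_1\leftrightarrow p_2$; this is why Theorem~\ref{thm:really_new} (which only literally excludes $\parallel$ in the left slot) suffices.
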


\noindent
Hans Havlicek\\
Institut f\"{u}r Diskrete Mathematik und Geometrie\\
Technische Universit\"{a}t\\
Wiedner Hauptstra{\ss}e 8--10/104\\
A-1040 Wien\\
Austria\\
\texttt{havlicek@geometrie.tuwien.ac.at}
\par~\par
\noindent Stefano Pasotti\\
DICATAM-Sez. Matematica\\
Universit\`{a} degli Studi di Brescia\\
via Branze, 43\\
I-25123 Brescia\\
Italy\\
\texttt{stefano.pasotti@unibs.it}
\par~\par
\noindent
Silvia Pianta\\
Dipartimento di Matematica e Fisica\\
Universit\`{a} Cattolica del Sacro Cuore\\
via Trieste, 17\\
I-25121 Brescia\\
Italy\\
\texttt{silvia.pianta@unicatt.it}

\end{document}